\documentclass[12pt,a4paper,reqno]{amsart}
\usepackage{amsmath,amscd,amssymb,latexsym}
\usepackage{longtable}

\newtheorem{Thm}{Theorem}[section]
\newtheorem{Cor}[Thm]{Corollary}
\newtheorem{Lem}[Thm]{Lemma}
\newtheorem{Prop}[Thm]{Proposition}
\theoremstyle{definition}

\newtheorem{Exm}[Thm]{Example}

\newtheorem{Rk}[Thm]{Remark}

\begin{document}

\begin{center}
\bf{\LARGE{Splittability, non-splittability, and automatic splittability of metacyclic $p$-groups}}
\end{center}

\title[Metacyclic $p$-groups]{{}}
\author[A. Schweizer]{Andreas Schweizer}

\address{Andreas Schweizer,
Department of Mathematics Education, 
Kongju National University, 
Gongju, 33588 South Korea}
\email{schweizer@kongju.ac.kr}

\thanks{The author was supported by the Basic Science Research Program
through the National Research Foundation of Korea (NRF) funded by the 
Ministry of Education (No. 2022R1A2C1010487).}

\begin{abstract}
Let $P$ be a finite metacyclic $p$-group where $p$ is an odd prime. We refine the notions 
split metacyclic and non-split metacyclic group by distinguishing whether a metacyclic structure
(i.e. a cyclic normal subgroup $K$ of $P$ such that $P/K$ is also cyclic) is non-splittable or splittable
or automatically split. We show that these different types can be characterized in terms of $|K|$. We 
also determine which of these types can coexist on the same group $P$.
\\ 
{\bf 2020 Mathematics Subject Classification:} 20D15 
\\ 
{\bf Keywords:} $p$-group; metacyclic group; split metacyclic 
\end{abstract}

\maketitle 

\section{Introduction}

\noindent
If $G$ is a finite group, we write $|G|$ for its order, $exp(G)$ for its exponent,
$G'$ for its commutator subgroup, and $Z(G)$ for its centre. Also, $C_n$ denotes
the cyclic group of order $n$.
\par
A finite group $G$ is called metacyclic if it has a metacyclic structure, that is, 
a cyclic normal subgroup $K\trianglelefteq G$ such that $G/K$ is also cyclic. 
This implies of course that $G'\subseteq K$ and hence that $G'$ is also cyclic. 
Some older books, e.g. \cite{Zass} and \cite{Hall}, use the more restrictive definition 
that a metacyclic group is a group $G$ for which $G'$ and $G/G'$ are 
cyclic. But from Lemma \ref{cyclic} below one immediately sees that with that
definition the only metacyclic $p$-groups would be the cyclic ones.
\par
A metacyclic factorization $G=SK$ is a metacyclic structure $K\trianglelefteq G$
together with a cyclic subgroup $S$ of $G$ such that $K$ and $S$ together
generate $G$.
A metacyclic factorization $G=SK$ is called split if $S\cap K$ is trivial, i.e. 
$G=K\rtimes S$, otherwise non-split.
\par
A metacyclic group is called split metacyclic if it has at least one split metacyclic
factorization, otherwise non-split metacyclic. 
\par
Implicitly this already indicates that 
a metacyclic group might have different metacyclic structures, and a metacyclic 
structure might give rise to different metacyclic factorizations.
This can even happen when the underlying metacyclic group is a $p$-group,
which is the object of our interest. However, there are restrictions.

\begin{Thm} \label{YaLiu} {\rm \cite[Lemma 9]{YangLiu}} 
Let $P$ be a non-abelian metacyclic $p$-group of odd order. Suppose that both 
$P=SK$ and $P=S_1 K_1$ are split metacyclic factorizations. Then $|K|=|K_1|$.
Moreover, $SK_1$ and $S_1 K$ are also split metacyclic factorizations.
\end{Thm}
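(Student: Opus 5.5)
The plan is to attach numerical invariants of $P$ to a split factorization and show that they determine $|K|$. Then, for the second claim, I would use a characteristic subgroup of order $p$ that lies in every metacyclic kernel.

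\emph{Step 1 (normal form and invariants).} Write a split factorization $P=K\rtimes S$ with $K=\langle a\rangle$ of order $p^m$ and $S=\langle b\rangle$ of order $p^n$. Since $p$ is odd, $\mathrm{Aut}(K)$ has a cyclic Sylow $p$-subgroup generated by $a\mapsto a^{1+p}$, so after replacing $b$ by a suitable generator of $S$ we get $bab^{-1}=a^{1+p^{m-s}}$. Here $P$ non-abelian forces $1\le s\le\min(m-1,n)$. Then $P'=\langle a^{p^{m-s}}\rangle$ has order $p^s$, and $P/P'\cong C_{p^{m-s}}\times C_{p^n}$. Since metacyclic $p$-groups with $p$ odd are regular, the power structure behaves as in the abelian case, so $exp(P)=p^{\max(m,n)}$.

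\emph{Step 2 ($|K|$ is an invariant).} Let $P=S_1K_1$ have parameters $(m',n',s')$. Then $s'=s$ (from $|P'|$), and the multiset $\{m-s,n\}$ equals $\{m'-s,n'\}$ (from the type of $P/P'$). If $m'\ne m$, we must have $m'-s=n$ and $n'=m-s$, that is, $m'=n+s$, with $m\ne n+s$. I would then compare exponents:
\begin{itemize}
\item If $m>n+s$, then $\max(m',n')=\max(n+s,m-s)<m=\max(m,n)$.
\item If $m<n+s$, then $\max(m',n')\ge n+s>\max(m,n)$, because $s\ge1$.
\end{itemize}
Both cases contradict $exp(P)=p^{\max(m,n)}=p^{\max(m',n')}$. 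Hence $m=m'$, i.e. $|K|=|K_1|$, and consequently $|S|=|S_1|$ as well.

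\emph{Step 3 ($SK_1$ and $S_1K$ are split).} Since $|S||K_1|=|P|$, it suffices to show $S\cap K_1=1$; then $|SK_1|=|P|$, so $SK_1=P$ and the factorization is split. The key observation is that $P'\ne1$ is contained in every metacyclic kernel, because $P/K_1$ is cyclic. As $K$ and $K_1$ are cyclic, their unique subgroups of order $p$ coincide: $\Omega_1(K)=\Omega_1(P')=\Omega_1(K_1)$. Now suppose $S\cap K_1\neq1$. Since $S$ is cyclic, $S\cap K_1$ contains $\Omega_1(S)$, and $\Omega_1(S)=\Omega_1(K_1)=\Omega_1(K)\subseteq K$. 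This contradicts $S\cap K=1$. The argument for $S_1K$ is symmetric.

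The main obstacle I expect is Step 1's exponent claim, which relies on regularity of metacyclic $p$-groups for odd $p$. If I prefer to avoid regularity, I would instead compute $(b^ja^i)^{p^k}$ directly from the relation $bab^{-1}=a^{1+p^{m-s}}$. Everything else is bookkeeping with the invariants $|P|$, $|P'|$, the type of $P/P'$, and $exp(P)$.
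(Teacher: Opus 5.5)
Your proof is correct. It necessarily takes a different route from the paper, which gives no argument of its own: it quotes the statement from Lemma 9 of Yang and Liu, and only remarks that $|K|=|K_1|$ also follows from Kirtland's theorem that all semidirect factorizations of a split metacyclic $p$-group ($p$ odd) are isomorphic.

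You replace this external input by a self-contained invariant count.

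In Step 2, the normal form $bab^{-1}=a^{1+p^{m-s}}$ (essentially the split case $\beta=\gamma$ of Theorem \ref{Sim}) gives three invariants: $|P'|=p^s$, $P/P'\cong C_{p^{m-s}}\times C_{p^n}$, and $exp(P)=p^{\max(m,n)}$. For the exponent you can simply quote Theorem \ref{exponent} instead of appealing to regularity. If $m'\neq m$, these invariants force $m'=n+s$ and $n'=m-s$, and your two cases correctly exclude this using $s\geq 1$. The paper makes the same kind of comparison of $P/P'$ in its proof of Theorem \ref{nonsplit}(b).

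Step 3 is also sound and proves a bit more than required. Since $1\neq P'\subseteq K\cap K_1$ forces $\Omega_1(K)=\Omega_1(K_1)$, a complement $S$ of $K$ meets \emph{every} metacyclic kernel trivially, for any prime $p$. The equality $|K|=|K_1|$ is then needed only to get $|SK_1|=|P|$.

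Kirtland's result yields more: isomorphism of the factorizations, not just equality of orders. Your route has the advantage of using only tools already available in the paper.
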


The first claim also follows from the more general result that for a split 
metacyclic $p$-group with $p$ odd all semidirect factorizations are 
isomorphic \cite[Theorem 1]{Kirt}.
\par
Building on \cite{YangLiu} we will generalize Theorem \ref{YaLiu}.
For that it is convenient to slightly generalize the definitions. 
\\ \\
{\bf Definitions:}
A metacyclic factorization $G=SK$ is called 
\begin{itemize}
\item[(a)]  {\bf splittable} if it is split or one can replace $S$ by a cyclic subgroup 
$R$ of $G$ such that $G=RK$ is a split metacyclic factorization;
\item[(b)]  {\bf non-splittable} if every metacyclic factorization $G=RK$ (with the 
given $K$) is non-split (including for $R=S$);
\item[(c)]  {\bf automatically split} if every metacyclic factorization $G=RK$ (with
the given $K$) is split (including for $R=S$).
\end{itemize}
Correspondingly, the underlying metacyclic structure $K\trianglelefteq G$ is also 
called splittable, resp. non-splittable, resp. automatically split.
\\ \\
The first surprise is that for $p$-groups the question whether a metacyclic structure 
$K\trianglelefteq P$ is splittable, non-splittable or automatically split can be immediately 
answered from the knowledge of $|K|$.

\begin{Thm} \label{main1}
Let $P$ be a non-abelian metacyclic $p$-group where $p$ is odd.
A metacyclic structure $K$ of $P$ is
\begin{itemize}
\item[(a)] non-splittable if and only if $|K|< exp(P)$ and $|P:K|< exp(P)$;
\item[(b)] splittable (but not automatically split) if and only if $|K| = exp(P) > |P:K|$;
\item[(c)] automatically split if and only if $|P:K|=exp(P)$.
\end{itemize}
\end{Thm}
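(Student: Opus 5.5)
Since the three conditions in (a), (b), (c) are mutually exclusive and exhaustive (using $exp(P)\le\max(|K|,|P:K|)$ for $P=SK$ with $K,P/K$ cyclic, and $|K|=|P:K|=exp(P)$ forcing $P$ to be regular of exponent $\ge |K|$, so at most one of $|K|$, $|P:K|$ equals $exp(P)$ unless both do), it suffices to prove three implications: $|P:K|=exp(P)$ forces every factorization $P=RK$ to be split; $|K|=exp(P)>|P:K|$ gives a split factorization but also a non-split one; and $|K|,|P:K|<exp(P)$ rules out every split factorization. Throughout I will use that for $p$ odd a metacyclic $p$-group is regular (indeed powerful), so $exp(P)=\max$ of orders of generators. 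In particular $\Omega$ and power maps behave well: $(xy)^{p^k}=x^{p^k}y^{p^k}c$ with $c\in (P')^{p^k}$.

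\emph{Non-splittable case.} If $P=RK$ is split with $R\cap K=1$, then $P=K\rtimes R$ and $|R|=|P:K|$. Since $P$ is regular and generated by a generator $k$ of $K$ and $r$ of $R$, $exp(P)=\max(|k|,|r|)=\max(|K|,|P:K|)$. Hence if both are $<exp(P)$ no split factorization exists, proving the "if" direction of (a).

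\emph{Automatically split case.} Suppose $|P:K|=exp(P)=p^e$ and $P=RK$ with $R=\langle r\rangle$. Then $rK$ generates $P/K\cong C_{p^e}$, so $|r|\ge p^e$, hence $|r|=p^e$ and $|R|=|P:K|$, forcing $|R\cap K|=|R||K|/|P|=1$. So every factorization is split.

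\emph{Splittable case.} Assume $|K|=p^e=exp(P)>|P:K|=p^m$. Pick any $S=\langle s\rangle$ with $P=SK$; then $s^{p^m}\in K$. The main obstacle is to modify $s$ by an element of $K$ so that $(sk)^{p^m}=1$. Using regularity (or the explicit power formula in the metacyclic group, $(sk)^{p^m}=s^{p^m}k^{1+t+\dots+t^{p^m-1}}$ where $s$ acts on $K$ by $k\mapsto k^t$, $t\equiv 1 \pmod p$), the map $k\mapsto k^{(t^{p^m}-1)/(t-1)}$ has image $K^{p^m}$ since $(t^{p^m}-1)/(t-1)$ has $p$-adic valuation exactly $m$. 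So it suffices to show $s^{p^m}\in K^{p^m}$, i.e.\ $|s^{p^m}|\le p^{e-m}$; this holds because $|s|\le exp(P)=p^e$. Then $R=\langle sk\rangle$ gives a split factorization. To see it is not automatically split, take $s'=sk_0$ with $k_0$ a generator of $K$: $s'$ still generates $P$ modulo $K$, and if $|s'|=p^e$ as in regular groups (using $|s|\le p^e$, $|k_0|=p^e$ and choosing between $s k_0$ and $k_0$-multiples appropriately, or simply $R=\langle k_0 s\rangle$ where $(k_0 s)^{p^m}$ is a generator-multiple of $k_0^{p^m}$ times an element of smaller order), then $|R|>p^m$ so $R\cap K\ne1$ and the factorization is non-split. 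The converse directions follow from exhaustiveness and exclusivity of the cases.
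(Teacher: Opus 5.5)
\textbf{Comparison with the paper.} Your handling of (a) and (c) matches the paper: (a) is Theorem~\ref{exponent} applied to a putative complement, and (c) is the order-counting argument of Theorem~\ref{cyclicsplitting}(b)(ii). Your reduction to the three right-to-left implications is also the same.

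The real difference is in (b). The paper cites Yang--Liu and Berkovich for the existence of a complement when $|K|=exp(P)$. It then uses Huppert's regularity criterion, $(xy^{-1})^{p^t}=1 \iff x^{p^t}=y^{p^t}$, to show that $\langle xy^{-1}\rangle$ meets $K$.

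You instead prove the splitting from scratch. You use $(sk)^{p^m}=s^{p^m}k^{(t^{p^m}-1)/(t-1)}$ together with $v_p\bigl((t^{p^m}-1)/(t-1)\bigr)=m$ for $t\equiv 1 \pmod p$. This is correct: $s^{p^m}\in K$ has order $|s|/p^m\le p^{e-m}$, so it lies in $K^{p^m}$, which is exactly the image of $k\mapsto k^{(t^{p^m}-1)/(t-1)}$. Your argument is self-contained and shows precisely where $p$ odd is needed. For $p=2$ and $t\equiv -1 \pmod 4$ the valuation exceeds $m$, which is the quaternion obstruction. The paper's route is shorter but outsources the key step.

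\textbf{Two places need repair.}

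First, the inequality you invoke for exhaustiveness is backwards. The bound $exp(P)\le\max(|K|,|P:K|)$ is false in general; it would rule out case (a) altogether. What you need, and what is true, is $|K|\le exp(P)$ and $|P:K|=exp(P/K)\le exp(P)$. These make the three right-hand conditions an obvious partition. Your remark about $|K|=|P:K|=exp(P)$ is then unnecessary.

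Second, you construct the non-split factorization from the arbitrary $s$ you started with, and then it can fail. If $s=s_0k_0^{-1}$ with $s_0$ a splitting generator, then $sk_0=s_0$ generates a complement. Your hedging (``if $|s'|=p^e$'', ``an element of smaller order'') does not close this. You must take the $s_0$ with $s_0^{p^m}=1$ that you have just constructed. Then your own formula gives $(s_0k_0)^{p^m}=k_0^{p^m u}$ with $p\nmid u$, which is nontrivial because $m<e$. Hence $s_0k_0$ has order $p^e>p^m$ and $\langle s_0k_0\rangle\cap K\neq 1$. This mirrors the paper's use of $xy^{-1}$, but with your elementary power formula replacing the appeal to regularity.

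With these two fixes the proof is complete.
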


\noindent
Secondly, there are restrictions which metacyclic structures can occur on the
same group.

\begin{Thm} \label{main2}
Let $p$ be an odd prime. Every non-abelian metacyclic group of order $p^N$ 
belongs to exactly one of the following $4$ types:
\begin{itemize}
\item[(a)]  {\bf automatically split:} Every metacyclic structure is automatically split. 
Such groups exist for every $N\geq 4$.
\item[(b)]  {\bf non-split metacyclic:} Every metacyclic structure is non-splittable.
Such groups exist for every $N\geq 6$. Their number is given by 
\cite[Theorem 2]{Liedahl}.
\item[(c)]  {\bf purely splittable:} Every metacyclic structure is splittable (but not 
automatically split). Such groups exist for $N=3$ and for every $N\geq 5$.
\item[(d)]  {\bf mixed type:} The group has at least one non-splittable and one 
splittable metacyclic structure.  The splittable structures are then necessarily 
not automatically split. Such groups exists for every $N\geq 4$.
\end{itemize}
\end{Thm}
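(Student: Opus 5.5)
Theorem \ref{main2} will follow from Theorem \ref{main1} together with Theorem \ref{YaLiu}, plus explicit examples for the existence claims. By Theorem \ref{main1}, the type of a metacyclic structure $K$ depends only on $|K|$, compared with $e=exp(P)$ and with $|P:K|=p^N/|K|$. So we sort $P$ by which values of $|K|$ actually occur among its metacyclic structures.

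\emph{The four types exhaust all cases and are mutually exclusive.}

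1. Suppose some structure $K_0$ is automatically split, i.e.\ $|P:K_0|=e$. Then $P=SK_0$ is split for any $S$. Let $K$ be any other structure.
   If $K$ were splittable, Theorem \ref{YaLiu} would give $|K|=|K_0|$. Then $|P:K|=|P:K_0|=e$, so $K$ is automatically split.
   If $K$ were non-splittable, we must exclude it. Since $P/K_0$ is cyclic of order $e$, we can pick $g\in P$ of order $e$ with $gK_0$ generating $P/K_0$. Then $\langle g\rangle\cap K_0=1$ and $P=\langle g\rangle\ltimes K_0$.
   Now project to $P/K$, which is cyclic. If $|P:K|=e$ we are done. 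Otherwise $|K|$ and $|P:K|$ are both $<e$, and we derive a contradiction. The plan is to use that $K_0$ and $K$ are two cyclic normal subgroups, so $K_0K/K$ and $K_0\cap K$ are controlled, and to compare orders: $|P|=|K_0|e$ forces $|K|\,|P:K|=|K_0|e$ with both factors $<e$. I expect to need the fact from the paper's lemmas that $exp(P)=\max(|K|,|P:K|)$-type bounds fail precisely in the non-split case.
   This yields type (a): all structures are automatically split.

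2. Otherwise, no structure satisfies $|P:K|=e$. Each structure is then non-splittable or splittable-but-not-automatic, according to whether $|K|<e$ or $|K|=e$. Note $|K|\le e$ always, since $K$ is cyclic.
   This gives exactly types (b), (c) and (d).
   The remark in (d) follows from step 1: a group with an automatically split structure has no non-splittable structure.

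\emph{Existence.} For each type we take the standard presentation
$$\langle a,b\mid a^{p^m}=1,\; b^{p^n}=a^{p^{m-s}},\; a^b=a^{1+p^{m-r}}\rangle$$
(as in \cite{Liedahl}) and compute the exponent and all cyclic normal subgroups with cyclic quotient.

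(a) Take $m<n$, split ($s=0$), with $r$ small. Here $exp(P)=p^n$ and every structure must have index $p^n$. This needs checking that no other cyclic normal $K$ with larger order appears. Orders $p^{m+n}$ with $m\ge2$, $n\ge m+1$ give every $N\ge4$.

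(b) These are Liedahl's non-split groups, which exist exactly for $N\ge6$ by \cite[Theorem 2]{Liedahl}. Every structure is then non-splittable, by Theorem \ref{main1} and step 1.

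(c) Take $m>n$, split: $K=\langle a\rangle$ has order $e=p^m>p^n$. Any other structure must also have order $e$. This works for $N=3$ (the extraspecial group of exponent $p^2$) and, with suitable parameters, for $N\ge5$.

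(d) Take $m=n$ split groups in which a second structure $\langle ab^{\,p}\rangle$ or similar has order $<e$ with cyclic quotient.

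\emph{Main obstacles.} The hardest part will be step 1's exclusion argument and the nonexistence claims in (c) for $N=4$ and in (b) for $N<6$. For (b) we rely on Liedahl's count. For (c) at $N=4$ we enumerate the few metacyclic groups of order $p^4$ and check that each falls into type (a) or (d).
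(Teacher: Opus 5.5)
Your Step~1 leaves the decisive case open, and the route you sketch cannot close it. Order bookkeeping alone does not exclude a non-splittable $K$ next to an automatically split $K_0$. For instance, $|P|=p^6$, $e=p^4$, $|K_0|=p^2$, $|K|=|P:K|=p^3$ satisfies $|K|\,|P:K|=|K_0|\,e$ with both factors below $e$. Each configuration really occurs in a group of order $p^6$ and exponent $p^4$: the group $\alpha=4$, $\beta=\gamma=2$, $\delta=1$, resp.\ the group $P_9$ from the proof of Lemma~\ref{p^6}. Only the structure of the groups separates them.

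The missing idea is the one behind Theorem~\ref{autosplit}. Write $P=SK$ as in Theorem~\ref{Sim}.
\begin{itemize}
\item Since $P$ has a splittable structure ($K_0$), Theorem~\ref{nonsplit}(b) forces $\delta\geq\beta$. So $S$ is normal (Remark~\ref{KorS}), and $P/S$ is cyclic.
\item Since $|K|<e$, Theorem~\ref{exponent} gives $|S|=e$. So $S$ is a splittable structure of order $e$ (Theorem~\ref{cyclicsplitting}(b)(i)).
\item Theorem~\ref{YaLiu} applied to $S$ and $K_0$ gives $e=|S|=|K_0|=|P|/e$, hence $|P|=e^2$.
\item Corollary~\ref{sqroot} then makes every structure automatically split, contradicting the non-splittability of $K$.
\end{itemize}

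The existence part also contains outright errors.
\begin{itemize}
\item Your (d) examples are of type (a). A split group with $m=n$ has $|P|=p^{2m}=exp(P)^2$, so by Corollary~\ref{sqroot} every structure satisfies $|K|=|P:K|=exp(P)$ and is automatically split; no structure of order $<e$ exists. A mixed example must come from a non-splittable presentation with $\delta\geq\beta$, e.g.\ $\alpha=N-2$, $\beta=1$, $\gamma=2$, $\delta=1$, where Theorem~\ref{nonsplit} yields both kinds of structure.
\item In (c), ``any other structure must also have order $e$'' is exactly what has to be proved, and it fails for split groups with $m>n$ in general. $P_8$ of Example~\ref{example5} ($|a|=p^4$, $|b|=p$) is of mixed type. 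You need a criterion ruling out non-splittable structures, such as Proposition~\ref{PropP'Z} and Corollary~\ref{CorP'Z} (choose parameters with $|P'|>|Z(P)|$). At $N=6$ you also need a separate argument like Lemma~\ref{p^6}: there $|P'|^2|Z(P)|=p^6$ (Theorem~\ref{HeKue}) forces $|P'|\leq|Z(P)|$ for every group, so Corollary~\ref{CorP'Z} never applies.
\item In (a), $m\geq 2$, $n\geq m+1$ only reaches $N\geq 5$. The case $N=4$ needs $m=n=2$, which is precisely your (d) recipe.
\item For (b), instead of relying on Liedahl's count you can exhibit $\alpha=N-3$, $\beta=2$, $\gamma=3$, $\delta=1$. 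It satisfies the inequalities of Theorem~\ref{nonsplit}(b) exactly when $N\geq 6$.
\end{itemize}
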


\noindent The proof will show explicit examples for every $N$.
\par
Using Theorem \ref{exponent} below, from these two theorems we obtain

\begin{Cor} \label{same}
Let $P$ be a non-abelian metacyclic $p$-group with odd $p$. Then
\begin{itemize}
\item[(a)]  Every splittable metacyclic factorization $P=SK$ has the same $|K|$,
namely $|K|=exp(P)$ or $|K|=\frac{|P|}{exp(P)}$.
\item[(b)]  Every non-splittable metacyclic factorization $P=SK$ has the same
$|S|$, namely $|S|=exp(P)$.
\end{itemize}
\end{Cor}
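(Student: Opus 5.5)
We derive Corollary \ref{same} from Theorems \ref{main1} and \ref{main2}, together with Theorem \ref{exponent}. We use Theorem \ref{exponent} only in the form we expect it to take: a non-abelian metacyclic $p$-group $P$ with $p$ odd has $exp(P)=\max\{|K|,|P:K|\}$ for every split factorization $P=K\rtimes S$, and more generally $exp(P)$ is the order of a cyclic subgroup generating $P$ together with $K$. We treat the two parts separately.

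For (a), let $P=SK$ be splittable. By Theorem \ref{main1} either $|K|=exp(P)>|P:K|$ (splittable but not automatically split), or $|P:K|=exp(P)$ (automatically split). In the second case $|K|=|P|/exp(P)$. It remains to show that both alternatives cannot occur on the same group $P$ for different splittable structures. This is exactly the content of Theorem \ref{main2}: the automatically split type (a) excludes the purely splittable and mixed types. In a mixed group the splittable structures are not automatically split, so all of them have $|K|=exp(P)$. In a purely splittable group none is automatically split, so again $|K|=exp(P)$. In an automatically split group every structure has $|K|=|P|/exp(P)$. Hence $|K|$ is constant over all splittable factorizations. Note that $exp(P)\neq |P|/exp(P)$ is possible, which is why both values are listed.

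For (b), let $P=SK$ be non-splittable, so by Theorem \ref{main1}(a) we have $|K|<exp(P)$ and $|P:K|<exp(P)$. We need $|S|=exp(P)$. Since $S$ is cyclic, $|S|\le exp(P)$. For the reverse inequality, take $x\in P$ of order $exp(P)$. We would like to show that some element of order $exp(P)$ together with $K$ generates $P$, and that every generator of $P$ modulo $K$ already has order $exp(P)$.

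Write a generator of $S$ as $s$ and $x=s^i k$ with $k\in K$. If $p\mid i$, then $x\in\langle s^p\rangle K$, a proper normal subgroup $Q$ of $P$ that is metacyclic with $K\trianglelefteq Q$. In a regular $p$-group (metacyclic groups with $p$ odd are regular), $exp(Q)\le\max(|s^p|,|K|)$. Since $|s^p|\le |s|/p\le exp(P)/p$ and $|K|<exp(P)$, this gives $exp(Q)<exp(P)$, a contradiction. Hence $p\nmid i$, and $\langle x\rangle K=P$.

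Now let $y$ be any element with $\langle y\rangle K=P$. Regularity gives $\Omega$/$\mho$ control: $y^{p^m}$ and $x^{p^m}$ differ modulo $K$-contributions of order at most $|K|$. More precisely, in regular groups $(ab)^{p^m}=a^{p^m}b^{p^m}c$ with $c\in\mho_m(\langle a,b\rangle')$. Taking $m$ with $p^m=|K|$ kills $K$-parts. The element $y^{|K|}$ then has the same order, up to a unit power, as $x^{|K|}$, namely $exp(P)/|K|>1$. Therefore $|y|=exp(P)$, and in particular $|S|=exp(P)$.

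The main obstacle is this regular-power comparison in (b). It must be handled carefully using $P'\le K$ and $|K|<exp(P)$.
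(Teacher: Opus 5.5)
Your proof is correct. Part (a) is the paper's argument: Theorem \ref{main1} forces $|K|=exp(P)$ or $|P:K|=exp(P)$ for a splittable structure, and Theorem \ref{main2} keeps the two cases from occurring on the same group.

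For part (b) you take a longer route than necessary. You guessed Theorem \ref{exponent} only for split factorizations, but the paper states $exp(P)=\max\{|K|,|S|\}$ for \emph{every} metacyclic factorization $P=SK$. Combined with $|K|<exp(P)$ from Theorem \ref{main1}(a), this gives $|S|=exp(P)$ in one line, and that is the paper's proof. Your regularity argument in effect reproves the nontrivial inequality $exp(P)\le\max\{|K|,|S|\}$ in this special case, and it is sound. With $y=x^jk$ and $p\nmid j$, the correction term lies in $\mho_m$ of a subgroup of $P'\le K$, so it vanishes for $p^m=|K|$. More simply, the criterion $a^{p^m}=b^{p^m}\Leftrightarrow(a^{-1}b)^{p^m}=1$, already used in the proof of Theorem \ref{main1}(b), gives $y^{|K|}=x^{j|K|}$ directly.

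Comparing the two routes: yours rests on the theory of regular $p$-groups, so it only works for odd $p$, which is harmless here. The paper's Theorem \ref{exponent} is elementary and also holds for $p=2$.

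Your first step, showing that an element of maximal order generates $P$ modulo $K$, can be dropped. Write an element of order $exp(P)$ as $x=s^ik$. Regularity gives $x^{|K|}=s^{i|K|}\neq 1$, which already forces $|s|\ge exp(P)$.

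Finally, your side remark that $exp(P)$ is ``the order of a cyclic subgroup generating $P$ together with $K$'' is false if read as ``every'' such subgroup. A complement of a structure with $|K|=exp(P)>|P:K|$ has smaller order. You never use the remark, so it does not affect the proof.
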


\section{Metacyclic $p$-groups (including $p=2$)}\label{sect:allp}

\noindent
Although we are mainly interested in $p$-groups for odd primes $p$, the 
results in this section also hold for the case $p=2$.

\begin{Thm}\label{exponent}
If $P=SK$ is a metacyclic factorization of a $p$-group 
$P$, then 
$$exp(P)=\max\{|K|, |S|\}.$$
\end{Thm}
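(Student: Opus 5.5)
The plan is to prove the two inequalities separately. The bound $exp(P)\geq\max\{|K|,|S|\}$ is immediate, because $K$ and $S$ are cyclic subgroups of $P$, so $P$ contains elements of order $|K|$ and of order $|S|$. All the work is in the reverse inequality. Put $p^e=\max\{|K|,|S|\}$ and $|K|=p^n$, so that $n\leq e$. We must show that $g^{p^e}=1$ for every $g\in P$.

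Since $P=SK$ and $K\trianglelefteq P$, every $g\in P$ can be written as $g=xk$ with $x\in S$ and $k\in K$. Write $K=\langle c\rangle$. Conjugation by $x$ restricts to an automorphism of $K$, so $x^{-1}cx=c^r$ for some integer $r$ prime to $p$. This automorphism has $p$-power order because $P$ is a $p$-group. I would then use the standard identity
$$(xk)^m = x^m\cdot (x^{-(m-1)}kx^{m-1})\cdots(x^{-1}kx)\,k,$$
which in $K$, written additively, says that $(xk)^m=x^m\cdot k^{1+r+r^2+\cdots+r^{m-1}}$ (after relabelling $r$ as needed). Take $m=p^e$. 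The order of $x$ divides $|S|\leq p^e$, so $x^{p^e}=1$. It therefore suffices to show that $p^n$ divides $\sum_{j=0}^{p^e-1}r^j$. Since $n\leq e$, it is enough to show that $p^e$ divides this sum.

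The number-theoretic core, and the only real step, is a pair of claims. First, $r\equiv 1 \pmod p$. For odd $p$ this holds because the elements of $p$-power order in $(\mathbb{Z}/p^n)^\times$, a group of order $p^{n-1}(p-1)$, are exactly those congruent to $1$ mod $p$. For $p=2$ it holds trivially because $r$ is odd. Second, for any $r\equiv1\pmod p$ one has $p^e\mid \sum_{j<p^e}r^j$. I would prove the second claim uniformly in $p$ using the factorization
$$\sum_{j=0}^{p^e-1}r^j=\prod_{i=0}^{e-1}\bigl(1+r^{p^i}+r^{2p^i}+\cdots+r^{(p-1)p^i}\bigr).$$
Each factor is a sum of $p$ terms, each $\equiv1\pmod p$, so each factor is $\equiv p\equiv0\pmod p$, and the product is divisible by $p^e$. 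This works for $p=2$ as well, as the section requires.

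The main point to handle carefully is the bookkeeping in the product formula for $(xk)^m$: whether the exponent is $\sum r^j$ or $\sum r^{-j}$. This does not matter, since $r^{-1}$ also has $p$-power order and is $\equiv1 \pmod p$. With that settled, $g^{p^e}=1$ for all $g\in P$, so $exp(P)\leq p^e$, which combined with the first inequality gives Theorem \ref{exponent}.
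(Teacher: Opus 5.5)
Your proof is correct and is essentially the paper's argument: the paper writes each element as $ab$ with $a\in S$, $b\in K$, uses $(ab)^{p^e}=a^{p^e}b^{1+r+\cdots+r^{p^e-1}}$, and shows $p^e$ divides the exponent by induction on $(r^{2^t}-1)/(r-1)$, but it does this only for $p=2$ and cites Yang--Liu for odd $p$. Your additional observation that $r\equiv 1 \pmod p$ (because $c\mapsto c^r$ has $p$-power order), together with the base-$p$ digit factorization of $\sum_{j<p^e}r^j$, makes the same computation work uniformly for every prime $p$, so your version is self-contained where the paper's relies on the citation for odd $p$.
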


\begin{proof} 
This is shown in 
\cite[Lemma 7]{YangLiu} for odd $p$.
\par
The proof for $p=2$ is similar. Every element of $P$ can be written as
$ab$ with (not necessarily unique) $a\in S$, $b\in K$. Note that
$a^{-1}ba=b^r$ with an odd integer $r$. By induction over $t$ one easily
checks that $2^t$ divides $\frac{r^{2^t}-1}{r-1}$. So if $2^t =\max\{|K|, |S|\}$,
then 
$(ab)^{2^t} = a^{2^t} b^{1+r+r^2 +\cdots + r^{2^t -1}} = b^{\frac{r^{2^t}-1}{r-1}} =1$.
\end{proof}

The following is an immediate consequence.

\begin{Cor}\label{sqroot}
If $P$ is metacyclic, then $|P|$ divides $(exp(P))^2$.
\par 
If moreover $exp(P)=\sqrt{|P|}$, then every metacyclic factorization 
necessarily satisfies $|K|=|P:K|=exp(P)$ and is automatically split.
\end{Cor}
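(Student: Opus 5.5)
Both parts of Corollary \ref{sqroot} follow by counting orders in a metacyclic factorization $P=SK$ and applying Theorem \ref{exponent}.

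For the first claim, I fix any metacyclic structure $K$ of $P$. Since $P/K$ is cyclic, I choose $g\in P$ whose image generates $P/K$ and put $S=\langle g\rangle$. Then $P=SK$ is a metacyclic factorization. Because $SK/K\cong S/(S\cap K)$, we get
$$|P|=\frac{|S|\,|K|}{|S\cap K|}\le |S|\,|K|\le (\max\{|K|,|S|\})^2 .$$
By Theorem \ref{exponent} the right-hand side equals $(exp(P))^2$. All the quantities involved are powers of $p$, so the inequality is in fact divisibility: $|P|$ divides $(exp(P))^2$.

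For the second claim, assume $exp(P)^2=|P|$ and let $P=SK$ be an arbitrary metacyclic factorization. Write $e=exp(P)$. By Theorem \ref{exponent} we have $|S|\le e$ and $|K|\le e$. The chain above then reads
$$e^2=|P|=\frac{|S|\,|K|}{|S\cap K|}\le |S|\,|K|\le e^2,$$
so equality holds at every step. This forces $|S\cap K|=1$, so the factorization is split, and it forces $|S|=|K|=e$. Hence $|P:K|=|P|/|K|=e=|K|$.

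The factorization $P=SK$ was arbitrary. So for each $K$, every choice of $S$ (equivalently every $R$ with $P=RK$) gives a split factorization, which means each structure is automatically split. There is no real obstacle here. The only point needing care is that a metacyclic factorization always exists, which the choice of $S$ above settles, and that the order formula $|SK|=|S||K|/|S\cap K|$ applies; it does, since $SK$ is a subgroup because $K$ is normal.
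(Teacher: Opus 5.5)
Your proof is correct and is essentially the ``immediate consequence'' of Theorem \ref{exponent} that the paper has in mind: you combine $|P|=|S||K|/|S\cap K|\le |S||K|$ with $exp(P)=\max\{|K|,|S|\}$. In the equality case you then read off $S\cap K=1$ and $|S|=|K|=|P:K|=exp(P)$ for every factorization $P=SK$.
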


\begin{Lem}\label{cyclic}
Let $P$ be a finite $p$-group such that all abelian quotients of $P$ are 
cyclic. Then $P$ itself is cyclic.
\par 
In other words: A finite $p$-group is either cyclic or it has a quotient
$C_p \times C_p$. 
\par
Or even shorter: If $P/P'$ is cyclic, then $P$ is cyclic.
\end{Lem}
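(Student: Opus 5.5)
I would prove the last formulation, that $P/P'$ cyclic forces $P$ cyclic; the other two versions follow from it directly. Indeed, if all abelian quotients of $P$ are cyclic, then in particular $P/P'$ is cyclic. Conversely, if $P$ has no quotient $C_p\times C_p$, then $P/P'$ is an abelian $p$-group without such a quotient, and so it is cyclic, since a noncyclic finite abelian $p$-group has a quotient $C_p\times C_p$ (visible from its decomposition into cyclic factors).

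The cleanest route is through the Frattini subgroup. For a finite $p$-group, $\Phi(P)=P'P^p$, and $P/\Phi(P)$ is elementary abelian. The Burnside basis theorem says its dimension over $\mathbb{F}_p$ is the minimal number of generators of $P$. Now assume $P/P'$ is cyclic. Then $P/\Phi(P)=P/P'P^p$ is a quotient of $P/P'$, so it is cyclic and elementary abelian, hence of order at most $p$. By the basis theorem, $P$ is generated by a single element, so $P$ is cyclic.

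To keep the argument self-contained, I would prove the one nontrivial ingredient directly: if $M\subseteq P$ and $M\Phi(P)=P$, then $M=P$. The argument goes as follows. Every maximal subgroup of a finite $p$-group is normal of index $p$. This follows from the standard fact that proper subgroups of a nilpotent group are properly contained in their normalizers, which in turn is shown by induction using the nontrivial centre $Z(P)$. Hence every maximal subgroup contains $P'$ and $P^p$, which gives $\Phi(P)\supseteq P'P^p$, and that inclusion is all we need. Now pick $g\in P$ whose image generates the cyclic group $P/\Phi(P)$. Then $\langle g\rangle\Phi(P)=P$. If $\langle g\rangle\neq P$, it lies in some maximal subgroup $M$, and $M$ also contains $\Phi(P)$, so $M=P$, a contradiction. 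Thus $P=\langle g\rangle$.

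The main obstacle is establishing that maximal subgroups are normal of index $p$. This is exactly where the $p$-group hypothesis enters; for instance, $S_3$ has cyclic abelianization but is not cyclic. I would handle it by induction on $|P|$. Given a maximal $M$, either $Z(P)\not\subseteq M$, in which case $MZ(P)=P$ and $M$ is normal, or $Z(P)\subseteq M$, in which case we pass to $P/Z(P)$. Once $M$ is normal, $P/M$ has no nontrivial proper subgroups, so it has order $p$.
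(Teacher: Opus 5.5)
Your proof is correct, but it takes a different route from the paper.

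The paper runs its induction on the hypothesis ``all abelian quotients are cyclic'', which passes to quotients. In its quickest version, $P/Z(P)$ satisfies the hypothesis, so it is cyclic by induction. The classical fact that $G/Z(G)$ cyclic forces $G$ abelian then makes $P$ abelian, and $P$ is cyclic because it is an abelian quotient of itself. The paper's other elementary version instead uses a central element $x$ of order $p$ and shows directly that a non-cyclic $P$ would be $C_p\times C_{p^{e-1}}$.

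You put the induction into a structural lemma instead: maximal subgroups of a $p$-group are normal of index $p$. From this you get $P'\subseteq\Phi(P)$, and you conclude that a lift of a generator of $P/P'$ lies in no maximal subgroup. Both arguments use $Z(P)\neq 1$ as the essential $p$-group input.

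The paper's route is shorter, given the $G/Z(G)$ lemma. Yours is the one-generator case of the Burnside basis theorem, and it gives a slightly stronger statement: every element whose image generates $P/P'$ already generates $P$.

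One small point: in your self-contained argument you only use the inclusion $\Phi(P)\supseteq P'$. So the equality $\Phi(P)=P'P^p$ quoted in your second paragraph is not needed.
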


\begin{proof} 
\cite[Kapitel III, \S 7, Hilfsatz 7.1 c)]{Hupp}. 
\par
Alternatively, we offer a completely elementary proof by 
induction on $e$ where $|P|=p^e$. Let $x$ be an element of order $p$ in the
center of $P$. Every quotient of $P/\langle x\rangle$ also is a quotient of $P$.
So if $P/\langle x\rangle$ has a quotient $C_p \times C_p$ we are done. If not,
$P/\langle x\rangle$ is cyclic by induction. Fix an element $y\in P$ whose image
generates $P/\langle x\rangle$. If $y$ has order $p^e$, then $P$ is cyclic. 
Otherwise $y$ has order $p^{e-1}$ and we see 
$P \cong \langle x\rangle \times \langle y \rangle \cong C_p \times C_{p^{e-1}}$.
\par
Or even quicker: As a $p$-group $P$ has a nontrivial center $Z$. Every abelian 
quotient of $P/Z$ is an abelian quotient of $P$ and hence cyclic. Therefore by
induction $P/Z$ is cyclic. By a standard theorem this implies that $P$ is abelian,
and hence cyclic by the conditions of the theorem.
\end{proof}

The proof of the following theorem is largely taken from the literature. It is the 
first step towards our main results.

\begin{Thm}\label{cyclicsplitting}
Let $P$ be a finite metacyclic $p$-group.
\begin{itemize}
\item[(a)] If $P\cong K\rtimes S$ is a split metacyclic structure,
then $|K|=exp(P)$ or $|P:K|=exp(P)$.
\item[(b)] Conversely, let $K$ be a normal, cyclic subgroup of $P$. 
\begin{itemize} 
\item[(i)] If $|K|=exp(P)$, then $P/K$ is cyclic; and if moreover
$p$ is odd, the metacyclic structure $K\trianglelefteq P$ can be split.
\item[(ii)] If $|P:K|=exp(P)$ and $P/K$ is cyclic, then the metacyclic 
structure $K\trianglelefteq P$ splits automatically. 
\end{itemize} 
\end{itemize} 
\end{Thm}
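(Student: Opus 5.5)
Part (a) follows directly from Theorem \ref{exponent}. If $P=K\rtimes S$ is split, then $|S|=|P:K|$. Hence $exp(P)=\max\{|K|,|S|\}=\max\{|K|,|P:K|\}$, and so $exp(P)$ equals $|K|$ or $|P:K|$.

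For (b)(ii), suppose $|P:K|=exp(P)$ and $P/K$ is cyclic. Let $P=RK$ be any metacyclic factorization with this $K$, where $R=\langle r\rangle$. The image of $r$ generates $P/K$, so $|P:K|$ divides $|R|$. On the other hand $|R|\le exp(P)=|P:K|$. Hence $|R|=|P:K|=|RK|/|K|$, which gives $|R\cap K|=|R||K|/|RK|=1$. So every such factorization is split.

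For (b)(i), let $K$ be cyclic and normal with $|K|=exp(P)=p^e$. I first show $P/K$ is cyclic. Since $K=\langle k\rangle$ is normal, conjugation induces a homomorphism $P\to \mathrm{Aut}(K)$ whose kernel $C=C_P(K)$ contains $K$. Take any $x\in P$ and suppose $x^{p^j}\in K$ with $j$ minimal. Because $x$ has order at most $p^e$, in the abelian group $\langle x,K\rangle$ (when $x\in C$) the element $x$ has a $K$-component argument. That is, $\langle x\rangle K$ is abelian of exponent $p^e$ containing the cyclic $K$ of order $p^e$, so $K$ is a direct factor. For the noncentralizing part I would use that $P/K$ embeds suitably. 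Cleaner route: $P$ is metacyclic, so every subgroup and quotient of $P$ is metacyclic and $d(P/K)\le 2$. If $P/K$ were not cyclic, Lemma \ref{cyclic} gives a quotient $P/M\cong C_p\times C_p$ with $K\le M$. Then counting via Theorem \ref{exponent} and Corollary \ref{sqroot} applied to $M$ and a metacyclic structure of $P$ should yield a contradiction, though I admit this step is not settled. A more concrete attempt uses a fixed metacyclic factorization $P=S_0K_0$ and the rank: $P/\Phi(P)\cong C_p^2$ and $K\not\le\Phi(P)$ because $K$ has maximal order $exp(P)$. I would justify the latter by the fact that in a metacyclic $p$-group, elements of $\Phi(P)=P^p$ have order less than $exp(P)$ (for $p$ odd, using regularity). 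Then $K\Phi(P)/\Phi(P)\cong C_p$, so $P/K\Phi(P)\cong C_p$, and by Burnside's basis theorem $P/K$ is cyclic. For $p=2$ I would have to check $K\not\le\Phi(P)$ separately. I expect this to be the main obstacle.

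For the splitting when $p$ is odd, pick $x$ whose image generates $P/K$, say of order $p^m=|P:K|$. Then $x^{p^m}=k^{s}\in K$. Since $exp(P)=p^e=|K|$, the element $x$ has order at most $p^e$. I would then show that $p^m$ divides $s$. This holds because otherwise $x^{p^m}$ generates a subgroup of $K$ of order exceeding $p^{e-m}$, so $x$ would have order greater than $p^e$. Write $s=p^m t$ and seek $y=xk^{u}$ with $y^{p^m}=1$. For odd $p$, if $x^{-1}kx=k^{r}$ with $r\equiv1\pmod p$, then
\[(xk^u)^{p^m}=x^{p^m}k^{u(1+r+\dots+r^{p^m-1})}\]
up to the order convention, and $(1+r+\dots+r^{p^m-1})=p^m\cdot v$ with $v$ a unit mod $p$ (this is the odd-$p$ lifting-the-exponent fact). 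So we choose $u\equiv -tv^{-1}$, giving $y^{p^m}=1$, and $R=\langle y\rangle$ yields the split factorization $P=RK$. Here I must note that $r\equiv 1\pmod p$ holds because $P/C_P(K)$ is a $p$-group inside $\mathrm{Aut}(C_{p^e})$.
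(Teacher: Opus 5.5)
Your parts (a) and (b)(ii) are correct and coincide with the paper's proof. For (b)(i) the paper gives no argument of its own: it cites Yang--Liu (Lemma 8, Theorem 1) and Berkovich (Lemma 1, Theorem 2). Your splitting argument for odd $p$ is correct and self-contained, a genuine alternative to the citation. Its steps are: $exp(P)=|K|$ forces $x^{p^m}=k^{p^m t}$; replace $x$ by $xk^u$; then use that $1+r+\dots+r^{p^m-1}$ has $p$-adic valuation exactly $m$ when $r\equiv 1\pmod p$ and $p$ is odd. Your Frattini/Burnside-basis argument for the cyclicity of $P/K$ is also sound for odd $p$.

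There is, however, a genuine gap. The first assertion of (b)(i), that $P/K$ is cyclic, is claimed for every prime, including $p=2$. Section 2 explicitly covers $p=2$, and the paper relies on Berkovich for this case. You leave $p=2$ open. Your key step $K\not\le\Phi(P)$ rests on regularity, which is unavailable for $2$-groups. Your two earlier attempts (the ``$K$-component'' idea and the unfinished counting argument) are not proofs.

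The missing idea is to avoid regularity altogether. Take any metacyclic factorization $P=\langle a\rangle\langle b\rangle$ with $\langle b\rangle\trianglelefteq P$ and $a^{-1}ba=b^r$.
\begin{enumerate}
\item For every $p$ one has $r\equiv 1\pmod p$; for $p=2$ this holds simply because $r$ is odd.
\item Hence $P/\langle b^p\rangle$ is abelian. So $\langle a^p\rangle\langle b^p\rangle$ is a normal subgroup with elementary abelian quotient, and since it lies in $\Phi(P)$, we get $\Phi(P)=\langle a^p\rangle\langle b^p\rangle$.
\item This is a metacyclic factorization of $\Phi(P)$. Theorem~\ref{exponent}, which holds for $p=2$ as well, therefore gives $exp(\Phi(P))=\max\{|a^p|,|b^p|\}=exp(P)/p<|K|$ whenever $P\neq 1$.
\end{enumerate}
Thus $K\not\le\Phi(P)$, and your Burnside-basis argument shows that $P/K$ is cyclic for all $p$. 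The same computation also justifies the identity $\Phi(P)=P^p$, which you used without proof in the odd case.
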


\begin{proof}
(a) follows immediately from Theorem \ref{exponent}

(b)(i) The cyclicity of $P/K$ is shown in \cite[Lemma 8]{YangLiu} for odd 
$p$ and with a different proof in \cite[Lemma 1]{Berko} for all $p$. The 
splittability is proved in \cite[Theorem 1]{YangLiu} for odd $p$ and more 
generally in \cite[Theorem 2]{Berko} (with an extra condition that guarantees 
splitting if $p=2$). As pointed out in \cite[Example 1]{YangLiu}, the quaternion 
group is an instance of a $2$-group for which the splittability fails.

(b)(ii) Choose an element $a$ of $P$ whose image modulo $K$ generates $P/K$. 
Obviously the order of $a$ is at least $|P:K|$, on the other hand, it is at 
most $exp(P)$; so we have equality and $\langle a\rangle \cap K=\{1\}$. 
\end{proof}

\begin{Rk}\label{error} 
Part (b)(i) of Theorem \ref{cyclicsplitting} is also claimed in \cite[Theorem 2]{YangLiu}
to hold more generally for metacyclic groups of odd order. This can however not 
be true. Denote by $G_{21}$ the non-abelian group of order $21$ and consider 
$$G=C_{147} \times G_{21}.$$
Then $G$ is split metacyclic with kernel $C_3 \times C_7$ and 
$S=C_{49} \times C_3$. Also $exp(G)=147$ and $K=C_{147}$ (left direct factor) 
is a normal subgroup, whose quotient is however not cyclic.
\par 
The problem in the proof of \cite[Theorem 2]{YangLiu} seems to be the application 
of \cite[Lemma 9]{YangLiu} (Theorem \ref{YaLiu}) to $N_q$ when $N_q$ is abelian. 
In that case we cannot always exclude the possibility that $Y_q =S_q$. 
At least, that's exactly what is happening in our counter-example. So 
it might be that \cite[Theorem 2]{YangLiu} can be rescued by adding an extra 
condition, for example that $G$ has no abelian Sylow $p$-subgroups. 
\par 
In that case one should also correct a typo in \cite[Corollary 2]{YangLiu}. 
The condition should be that $|S|$ divides $|K|$, and not the other way round.
\end{Rk}

\section{Metacyclic $p$-groups ($p$ odd)}\label{sect:p}

\noindent
In this section we will prove Theorems \ref{main1} and \ref{main2}. and some
additional results. This will however require some preparation.

\begin{Thm}\label{Sim}
{\rm \cite[Lemma 3.4]{Sim}}
Let $P$ be a finite metacyclic $p$-group where $p$ is an odd prime.
Assume that $P$ has a metacyclic factorization $P=SK$ with $|K|>1$.  
Define $\alpha$, $\beta$, $\gamma$, $\delta$ by
$$p^\alpha =|S:S\cap K|,\ \ p^\beta =|K:S\cap K|,\ \ p^\gamma =|K|,\ \ p^\delta =|K:P'|.$$
Then 
$$\beta\leq \gamma\leq \beta +\delta\ \ \ \ and\ \ \ \ 
0<\delta\leq \gamma\leq \alpha +\delta,$$
and one can find suitable generators $x$ and $y$ with $\langle x\rangle=S$
and $\langle y\rangle=K$ such that $P$ has a presentation
$$P=\langle x,y\ |\ x^{p^\alpha}=y^{p^\beta},\ y^{p^\gamma}=1,\ y^x=y^{1+p^\delta}\rangle.$$
Moreover, 
$$|P|=p^{\alpha+\gamma},\ \ |S|=p^{\alpha+\gamma-\beta},\ \ 
|P'|=p^{\gamma-\delta},\ \ |S\cap K|=p^{\gamma-\beta}.$$
\end{Thm}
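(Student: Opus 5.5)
We fix a metacyclic factorization $P=SK$ with $|K|>1$ and build the presentation step by step. First we choose generators: $K=\langle y\rangle$ is cyclic of order $p^\gamma$. $P/K\cong S/(S\cap K)$ is cyclic of order $p^\alpha$. Since $P'\subseteq K$ and $K$ is cyclic, $P'$ is the unique subgroup of $K$ of index $p^\delta$. For any generator $x$ of $S$, conjugation by $x$ induces an automorphism $y\mapsto y^r$ of $K$. Because $P=\langle x,y\rangle$ and $K$ is abelian, $P'$ is generated by $[y,x]=y^{r-1}$ together with its conjugates. Since $y^{r-1}$ generates a characteristic subgroup of $K$, it follows that $P'=\langle y^{r-1}\rangle$, and hence $v_p(r-1)=\delta$ (with $\delta\le\gamma$ in all cases).

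Next we show $\delta\ge1$. The automorphism group of a cyclic $p$-group has Sylow $p$-subgroup $\{y\mapsto y^{r}: r\equiv1 \bmod p\}$, and $x$ has $p$-power order. Hence $r\equiv 1\pmod p$, so $\delta>0$. This is also what $P$ non-cyclic modulo $P'$ (Lemma \ref{cyclic}) requires when $\gamma\ge1$.

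Now we normalise $r$ to $1+p^\delta$. Since $p$ is odd, the group $1+p\mathbb Z/p^\gamma$ is cyclic, generated by $1+p$. Its subgroup of elements $\equiv1 \bmod p^\delta$ is cyclic, generated by $1+p^\delta$, and $r$ generates it because $v_p(r-1)=\delta$. So there is $k$ prime to $p$ with $r^k\equiv1+p^\delta\pmod{p^\gamma}$. Replacing $x$ by $x^k$, which still generates $S$, gives $y^x=y^{1+p^\delta}$.

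For the remaining relation, $x^{p^\alpha}$ generates $S\cap K$, a subgroup of $K$ of order $p^{\gamma-\beta}$. So $x^{p^\alpha}=y^{up^\beta}$ with $u$ a unit mod $p^{\gamma-\beta}$, and we may choose $u$ to be a unit mod $p^\gamma$. Replacing $y$ by $y^{u}$ keeps $y^x=y^{1+p^\delta}$, because conjugation is a power map and so commutes with powering. This gives $x^{p^\alpha}=y^{p^\beta}$.

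The orders then follow directly:
\begin{itemize}
\item $|P|=|P:K|\,|K|=p^{\alpha+\gamma}$;
\item $|S\cap K|=p^{\gamma-\beta}$;
\item $|S|=p^{\alpha}p^{\gamma-\beta}$;
\item $|P'|=p^{\gamma-\delta}$.
\end{itemize}
The defining relations hold in $P$. A group with this presentation has order at most $p^{\alpha+\gamma}$, since every element can be written as $x^iy^j$ with $0\le i<p^\alpha$ and $0\le j<p^\gamma$. Therefore the presentation defines $P$.

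It remains to prove the inequalities. We have $\beta\le\gamma$ because $\beta$ is defined by an index in $K$, and $\delta\le\gamma$ as noted above. The inequality $\gamma\le\alpha+\delta$ says that the automorphism $y\mapsto y^{1+p^\delta}$, of order $p^{\gamma-\delta}$, has order dividing the order $p^\alpha$ of $P/K$. This holds because $x^{p^\alpha}\in K$ centralises $y$, so $(1+p^\delta)^{p^\alpha}\equiv1\pmod{p^\gamma}$. Since $p$ is odd, $v_p\big((1+p^\delta)^{p^\alpha}-1\big)=\alpha+\delta$, which gives $\gamma\le\alpha+\delta$.

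The main obstacle is $\gamma\le\beta+\delta$. For this we use that $x$ commutes with $x^{p^\alpha}=y^{p^\beta}$. This gives $y^{p^\beta(1+p^\delta)}=y^{p^\beta}$, i.e. $y^{p^{\beta+\delta}}=1$, hence $\gamma\le\beta+\delta$.
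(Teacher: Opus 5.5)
Your proof is correct and complete. It is self-contained, whereas the paper proves very little here. The paper takes the normal form and the inequalities $\beta\le\gamma\le\beta+\delta$ from Sim's Lemma 3.4, and only supplies arguments for $\gamma\le\alpha+\delta$ and $\delta>0$.

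For $\gamma\le\alpha+\delta$ the paper uses your idea: $x^{p^\alpha}\in K$ acts trivially on $y$.

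For $\delta>0$ the routes genuinely differ.
\begin{itemize}
\item The paper argues globally via Lemma \ref{cyclic}. If $\delta=0$ then $K=P'$, so $P/P'$ is cyclic. Hence $P$ is cyclic, so $P'=1$, which contradicts $|K|>1$.
\item You argue locally. The image of $x$ in $\mathrm{Aut}(K)\cong(\mathbb{Z}/p^\gamma\mathbb{Z})^\times$ has $p$-power order, so it lies in the Sylow $p$-subgroup $\{r\equiv 1 \bmod p\}$. This forces $P'=\langle y^{r-1}\rangle\subseteq\langle y^{p}\rangle$.
\end{itemize}

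Your derivation of $\gamma\le\beta+\delta$ (from $x$ commuting with $x^{p^\alpha}=y^{p^\beta}$) and your order-counting argument for the presentation replace the external citation. They also show exactly where oddness of $p$ enters. It is needed for the cyclicity of $1+p\mathbb{Z}/p^\gamma\mathbb{Z}$, which lets you normalise $r$ to $1+p^\delta$, and for the valuation $v_p\bigl((1+p^\delta)^{p^\alpha}-1\bigr)=\alpha+\delta$. Dihedral groups of order $16$ and above show both steps fail for $p=2$. The paper's version is shorter, but it rests on the reference and on Lemma \ref{cyclic}.

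Two degenerate cases deserve a line in a write-up:
\begin{itemize}
\item When $\delta=\gamma$ ($P$ abelian), the claim $v_p(r-1)=\delta$ only holds for a suitable representative of $r$ modulo $p^\gamma$, and you simply take $k=1$.
\item When $\beta=\gamma$, you have $x^{p^\alpha}=1$ and take $u=1$.
\end{itemize}
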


\begin{proof}
This is proved in \cite[Lemma 3.4]{Sim}. The condition 
$\gamma \leq \alpha +\delta$ is not mentioned there; it comes from
the fact that $x^{p^\alpha}$ lies in $K$ and hence acts as identity
on $y$. Also note that the cases where $P$ is abelian are represented 
by $\delta =\gamma$.
\par 
Finally we explain the reason for $\delta>0$. If $\delta=0$, then $K=P'$, 
so $P/P'$ is cyclic, and hence $P$ is cyclic by Lemma \ref{cyclic}. But 
then $P'$ is trivial and $\delta=0$ contradicts $|K|>1$.
\end{proof}

It is important to keep in mind that a metacyclic $p$-group can have different
presentations (which might have different advantages). We will make use of that
in several proofs. 
   So $\alpha$, $\beta$, $\gamma$, $\delta$ are not invariants of $P$.  
But some expressions in $\alpha$, $\beta$, $\ldots$ are invariants, as some
parts of the next lemma show.

\begin{Lem}\label{cardinalities}
Let $P$ be a finite metacyclic $p$-group ($p$ odd) with center $Z$ and
a presentation as in Theorem \ref{Sim}. Then
\begin{itemize}
\item[(a)] $exp(P)=\max\{p^{\gamma}, p^{\gamma+\alpha-\beta}\}$,
\item[(b)] $Z=(S\cap Z)(K\cap Z)$, (This actually holds for any metacyclic group.)
\item[(c)] $|S\cap Z|=p^{\alpha+\delta-\beta}$,
\item[(d)] $|K\cap Z|=p^{\delta}$,
\item[(e)] $|Z|=p^{\alpha-\gamma+2\delta}$,
\item[(f)] $P'\subseteq Z$ if and only if $\gamma\leq 2\delta$,
\item[(g)] $P'\subseteq S\cap K$ if and only if $\beta\leq \delta$,
\item[(h)] $S\cap K\subseteq P'$ if and only if $\delta\leq \beta$,
\item[(i)] $S$ is normal in $P$ if and only if $\beta\leq\delta$.
\end{itemize}
\end{Lem}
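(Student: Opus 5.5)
We work throughout with the presentation of Theorem \ref{Sim}: $P=\langle x,y\rangle$ with $x^{p^\alpha}=y^{p^\beta}$, $y^{p^\gamma}=1$, $y^x=y^{1+p^\delta}$, where $S=\langle x\rangle$ and $K=\langle y\rangle$. Every element of $P$ can be written as $x^iy^j$.

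Part (a) is immediate from Theorem \ref{exponent}, since $|K|=p^\gamma$ and $|S|=p^{\alpha+\gamma-\beta}$.

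Part (b) holds for any metacyclic group. Let $z=x^iy^j\in Z$. Then $z$ commutes with $y$, so $x^i$ centralizes $y$. Since $x^i$ centralizes $y$ and commutes with $x$, we get $x^i\in Z$. Hence $y^j=x^{-i}z\in Z$ as well, which gives $Z=(S\cap Z)(K\cap Z)$.

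For (d), an element $y^j$ is central iff it commutes with $x$. Since $x$ acts on $K$ by $y\mapsto y^{1+p^\delta}$, this means $y^{jp^\delta}=1$, i.e.\ $p^{\gamma-\delta}\mid j$. So $K\cap Z=\langle y^{p^{\gamma-\delta}}\rangle$, of order $p^\delta$.

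For (c), $x^i$ is central iff $(1+p^\delta)^i\equiv1\pmod{p^\gamma}$. Since $p$ is odd, $1+p^\delta$ has multiplicative order exactly $p^{\gamma-\delta}$ modulo $p^\gamma$. This is the standard $p$-adic fact, and it is the one place where oddness of $p$ is used. So $x^i$ is central iff $p^{\gamma-\delta}\mid i$, and therefore $|S\cap Z|=|S|/p^{\gamma-\delta}=p^{\alpha+\delta-\beta}$. Here $p^{\gamma-\delta}\le p^\alpha\le|S|$ by the constraint $\gamma\le\alpha+\delta$.

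For (e), use (b) together with $|Z|=|S\cap Z||K\cap Z|/|S\cap K\cap Z|$. We have $S\cap K=\langle y^{p^\beta}\rangle$, and this subgroup is central by (d) precisely when $\gamma-\delta\le\beta$, which holds by Theorem \ref{Sim}. So $S\cap K\cap Z=S\cap K$, of order $p^{\gamma-\beta}$. Then
$$|Z|=p^{(\alpha+\delta-\beta)+\delta-(\gamma-\beta)}=p^{\alpha-\gamma+2\delta}.$$
This bookkeeping is the main point to get right: the centrality of $S\cap K$ comes exactly from the inequality $\gamma\le\beta+\delta$.

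The remaining parts are comparisons of subgroups of the cyclic group $K$, which are totally ordered. We have $P'=\langle y^{p^\delta}\rangle$ of order $p^{\gamma-\delta}$, $K\cap Z$ of order $p^\delta$, and $S\cap K$ of order $p^{\gamma-\beta}$.
\begin{itemize}
\item[(f)] Since $P'\subseteq K$, we have $P'\subseteq Z$ iff $P'\subseteq K\cap Z$, iff $\gamma-\delta\le\delta$.
\item[(g)] $P'\subseteq S\cap K$ iff $\gamma-\delta\le\gamma-\beta$, iff $\beta\le\delta$.
\item[(h)] $S\cap K\subseteq P'$ iff $\gamma-\beta\le\gamma-\delta$, iff $\delta\le\beta$.
\item[(i)] If $S$ is normal, then $[y,x]\in S\cap K$, so $P'\subseteq S\cap K$ and (g) gives $\beta\le\delta$. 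Conversely, if $P'\subseteq S$, then $x^y=x[x,y]\in S$, so $S$ is normal.
\end{itemize}
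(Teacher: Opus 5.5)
Your proof is correct and follows the paper's argument essentially step by step. Like the paper, you split a central element via commutation with the generators, read off $S\cap Z$ and $K\cap Z$ from the action $y\mapsto y^{1+p^\delta}$, and settle (f)--(i) by comparing orders of subgroups of the cyclic group $K$. One small remark on (e): $S\cap K$ is central without any appeal to $\gamma\le\beta+\delta$, since its elements lie in both $\langle x\rangle$ and $\langle y\rangle$ and so commute with both generators. That is why the paper simply divides by $|S\cap K|$; the inequality is a consequence of this centrality, not its source.
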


\begin{proof}
(a) Theorem \ref{exponent}.
\par
(b) Consider $ab$ with $a\in S$ and $b\in K$. If $ab\in Z$, then in particular $ab$ 
commutes with everything in $S$. Since $a$ already commutes with everything in $S$,
$b$ must also commute with everything in $S$. But $b$ already commutes with everything 
in $K$, so $b\in Z$. Analogously $a\in Z$, so $Z\subseteq (S\cap Z)(K\cap Z)$.
The reverse inclusion is trivial.
\par
(c) From the relation $y^x =y^{1+p^{\delta}}$ we have
$x^{-p^k}yx^{p^k} = y^{(1+p^k)^{p^k}}$. So $x^{p^k}$ commutes with $y$ if
and only if $k\geq \gamma -\delta$. Hence $S\cap Z$ is generated by 
$x^{p^{\gamma-\delta}}$. 
\par
(d) similar to (c) we see that $K\cap Z$ is generated by $y^{p^{\gamma -\delta}}$.
\par
(e) From (b) we have $|Z|=\frac{|S\cap Z|\cdot |K\cap Z|}{|S\cap K|}$.
\par
(f) $P'\subseteq Z$ if and only if $P'\subseteq K\cap Z$. Now $P'$ and $K\cap Z$ are
both contained in $K$, which is a cyclic $p$-group. So $P'\subseteq K\cap Z$ if and only 
if $|P'|\leq |K\cap Z|$.
\par
(g) and (h) are similar to (f).
\par
(i) $S$ is normal in $P$ if and only if $P'\subseteq S$. So this is essentially statement (g).
\par
\end{proof}

\noindent
For later use we mention the following application.

\begin{Thm}\label{HeKue} {\rm \cite[Lemma 2.1]{HetKue}}
Let $P$ be a finite metacyclic $p$-group where $p$ is odd. Then
$$|P'|^2|Z(P)|=|P|.$$
\end{Thm}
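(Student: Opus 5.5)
The plan is to combine the presentation of Theorem \ref{Sim} with the cardinalities in Lemma \ref{cardinalities}. The identity $|P'|^2|Z(P)|=|P|$ will then reduce to an exponent count.

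Fix any metacyclic factorization $P=SK$. If $K$ is trivial, then $P=S$ is cyclic, so $P'=1$ and $Z(P)=P$, and the claim holds. Otherwise $|K|>1$, and Theorem \ref{Sim} provides parameters $\alpha,\beta,\gamma,\delta$ and a presentation. That theorem gives
\[
|P|=p^{\alpha+\gamma},\qquad |P'|=p^{\gamma-\delta}.
\]
From Lemma \ref{cardinalities}(e) we have
\[
|Z(P)|=p^{\alpha-\gamma+2\delta}.
\]
Multiplying these gives
\[
|P'|^2|Z(P)|=p^{2\gamma-2\delta+\alpha-\gamma+2\delta}=p^{\alpha+\gamma}=|P|,
\]
which is the claim.

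The only real content lies in Lemma \ref{cardinalities}(e), so I would verify its ingredients. Parts (c) and (d) rest on the observation that, because $p$ is odd, the element $(1+p^\delta)^{p^k}$ is $\equiv 1 \pmod{p^{\gamma}}$ exactly when $k\ge\gamma-\delta$. This is the standard fact that $1+p^\delta$ has multiplicative order $p^{\gamma-\delta}$ modulo $p^\gamma$ for $\delta\ge1$ and odd $p$.

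That observation gives $|K\cap Z|=p^{\delta}$. It also shows that $S\cap Z$ is generated by $x^{p^{\gamma-\delta}}$. Since $|S|=p^{\alpha+\gamma-\beta}$, this yields $|S\cap Z|=p^{\alpha+\delta-\beta}$; the requirement $\gamma-\delta\le\alpha+\gamma-\beta$ is guaranteed by $\beta\le\gamma\le\beta+\delta$.

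Part (b) then gives the product formula
\[
|Z|=\frac{|S\cap Z|\,|K\cap Z|}{|S\cap K|}=\frac{p^{\alpha+\delta-\beta}\,p^{\delta}}{p^{\gamma-\beta}}=p^{\alpha-\gamma+2\delta}.
\]
This step requires $S\cap K\subseteq Z$, which holds because $S\cap K$ is centralized by both $S$ and $K$.

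I expect the main obstacle to be the careful justification in (c) that $x^{p^k}$ centralizes $y$ precisely when $k\ge\gamma-\delta$. This is where oddness of $p$ is essential; for $p=2$ and $\delta=1$ the order of $1+p^\delta$ behaves differently. Everything else is arithmetic with the invariants.
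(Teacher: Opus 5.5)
Your proof is correct and follows the paper's own argument: you read off $|P|=p^{\alpha+\gamma}$ and $|P'|=p^{\gamma-\delta}$ from Theorem \ref{Sim} and $|Z(P)|=p^{\alpha-\gamma+2\delta}$ from Lemma \ref{cardinalities}(e), then add exponents. One small slip in your side check: the needed inequality $\gamma-\delta\le\alpha+\gamma-\beta$ (i.e.\ $\beta\le\alpha+\delta$) comes from $\beta\le\gamma\le\alpha+\delta$, not from $\beta\le\gamma\le\beta+\delta$.
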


\begin{proof}
We are not sure whether \cite{HetKue} is the first time this fact was 
discovered. In any case, there seem to be different ways to prove it. 
Our proof, which simply consists in looking up the cardinalities from 
Theorem \ref{Sim} and Lemma \ref{cardinalities}, admittedly does not 
furnish a lot of insight.
\end{proof}
\bigskip

\noindent
{\bf Proof of Theorem \ref{main1}:}
Since the conditions on the right side disjointly cover all possibilities, it suffices to only prove
the conclusions from right to left.
\par
(a) If $|K|<exp(P)$ and $|P:K|<exp(P)$, then $K$ cannot be splittable by Theorem \ref{exponent}.
\par
(b) If $|K|=exp(P)$, then $K$ is splittable by Theorem \ref{cyclicsplitting}(b)(i).
So there exists $x\in P$ with $P=K\rtimes\langle x\rangle$. In particular,
$ord(x)=|P:K|=p^t$, say. Under the condition $p^t <|K|$ we will now also construct
a non-split metacyclic factorization $P=S_1 K$. Take $S_1 = \langle xy^{-1}\rangle$
where $K=\langle y\rangle$. Obviously $P=S_1 K$, and we still have to show $|S_1|>p^t$.
Note that metacyclic $p$-groups with odd $p$ are regular, for example by
\cite[Kapitel III, \S 10, Satz 10.2. c)]{Hupp}.
But by
\cite[Kapitel III, \S 10, Satz 10.6. a)]{Hupp}
for a regular $p$-group $(xy^{-1})^{p^t} =1$ would be equivalent to 
$y^{p^t} = x^{p^t}$, which does not hold.
\par
(c) If $|P:K|=exp(P)$, the metacyclic structure splits automatically by 
Theorem \ref{cyclicsplitting}(b)(ii).
{} \hfill $\Box$
\\ \\
Part (b) of the following result is stated in \cite{YangLiu} as a Corollary.
We present a detailled proof, as we feel that it is not obvious.

\begin{Thm}\label{nonsplit}
Let $p$ be an odd prime and $P$ a metacyclic $p$-group with a presentation
$$P=\langle x,y\ |\ x^{p^\alpha}=y^{p^\beta},\ y^{p^\gamma}=1,\ y^x=y^{1+p^\delta}\rangle$$
as in Theorem \ref{Sim}. Then
\begin{itemize}
\item[(a)] The metacyclic structure with kernel $K=\langle y\rangle$ is 
non-splittable if and only if, in addition to the conditions in Theorem \ref{Sim},
we have $\beta<\gamma$ and $\beta<\alpha$.
\item[(b)] \cite[Corollary 1]{YangLiu} Every metacyclic structure on $P$ is non-splittable 
if and only if in addition to the conditions in Theorem \ref{Sim} and the conditions 
from part (a) we moreover have $\delta<\beta$, in other words, if and only if
$$0<\delta<\beta<\gamma\leq \beta+\delta <\alpha+\delta.$$  
\end{itemize}
\end{Thm}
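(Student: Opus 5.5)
Part (a) reduces to Theorem \ref{main1}(a) together with the cardinalities of Theorem \ref{Sim} and Lemma \ref{cardinalities}(a). With $|K|=p^\gamma$, $|P:K|=p^\alpha$, and $exp(P)=\max\{p^\gamma,p^{\gamma+\alpha-\beta}\}$, the structure $K$ is non-splittable if and only if $p^\gamma<exp(P)$ and $p^\alpha<exp(P)$. The first condition says $\gamma+\alpha-\beta>\gamma$, that is $\beta<\alpha$. Given this, $exp(P)=p^{\gamma+\alpha-\beta}$, so the second condition says $\alpha<\gamma+\alpha-\beta$, that is $\beta<\gamma$. (If $P$ is abelian, Theorem \ref{main1} is not available. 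In that case I would argue directly: $|S|>|P:K|$ exactly when $\beta<\gamma$. Applying Theorem \ref{exponent} to any other complement candidate $R$ shows that a split $R$ would force $|P:K|=exp(P)$ or $|K|=exp(P)$, and I would check these against the same inequalities.)

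For (b), the hard part is to control \emph{all} metacyclic structures, not just $\langle y\rangle$.

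For the direction "$\Rightarrow$", assume every structure is non-splittable. Part (a) then gives $\beta<\gamma$ and $\beta<\alpha$. I would argue by contradiction that $\delta<\beta$. Suppose $\delta\ge\beta$. By Lemma \ref{cardinalities}(i), $S=\langle x\rangle$ is normal in $P$, and $P/S\cong K/(S\cap K)$ is cyclic. So $S$ is itself a metacyclic structure, with factorization $P=KS$ (or $P=\langle y\rangle S$). Its order is $|S|=p^{\alpha+\gamma-\beta}$, which equals $exp(P)$ since $\beta<\alpha$ gives exponent $p^{\gamma+\alpha-\beta}$. By Theorem \ref{main1}(b),(c), or directly by Theorem \ref{cyclicsplitting}(b)(i), the structure $S$ is splittable, which is a contradiction.

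For the direction "$\Leftarrow$", assume $0<\delta<\beta<\gamma\le\beta+\delta<\alpha+\delta$. Let $L$ be an arbitrary metacyclic structure. By Theorem \ref{main1}, $L$ is splittable exactly when $|L|=exp(P)$ or $|P:L|=exp(P)$. Here $exp(P)=p^{\alpha+\gamma-\beta}$, and since $|P|=p^{\alpha+\gamma}$, this means $|L|=p^{\alpha+\gamma-\beta}$ or $|L|=p^\beta$. So I need to show that no cyclic normal subgroup $L$ with cyclic quotient has either of these orders.

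The key step, and the main obstacle, is this. Suppose such an $L$ existed. Then $P$ would have a split factorization $P=L\rtimes R$ with $\{|L|,|R|\}=\{p^\beta,p^{\alpha+\gamma-\beta}\}$. I would then use the invariants of Theorem \ref{HeKue} and Lemma \ref{cardinalities} to reach a contradiction. From the given presentation, $|P'|=p^{\gamma-\delta}$ and $|Z|=p^{\alpha-\gamma+2\delta}$. Compute the same invariants from a split presentation, where $\beta'=0$ in the notation of Theorem \ref{Sim} and the kernel is $L$.

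In the case $|L|=p^{\alpha+\gamma-\beta}$ (the large kernel), the first inequality of Theorem \ref{Sim} requires $\gamma'=\alpha+\gamma-\beta\le\beta'+\delta'=\delta'$. This forces $\delta'=\gamma'$, so $P'=1$, contradicting non-abelianness; note that $\delta<\gamma$ here since $\delta<\beta<\gamma$. In the case $|L|=p^\beta$, Theorem \ref{Sim} gives $|P'|=p^{\beta-\delta'}\le p^{\beta}$, while also $|P'|=p^{\gamma-\delta}$. Moreover, $P'\subseteq L$ and $L$ is the kernel of order $p^\beta$. Comparing $|K\cap Z|$, $|Z|$ and the constraint $\gamma'\le\alpha'+\delta'$ should yield $\delta\ge\beta$, contradicting $\delta<\beta$. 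I expect this final bookkeeping — matching invariant quantities such as $|Z|$, $|P'|$ and $|\Omega_1|$-type data between the two presentations — to be the delicate point. If it fails, I would fall back on a direct argument via regularity. In a regular $p$-group the element orders and power structure are determined by $P/\mho$, so I would show that the element $x^{p^{\alpha-\beta}}y^{-1}$-type relation $x^{p^\alpha}=y^{p^\beta}$ with $\delta<\beta$ prevents any element of order $p^\beta$ from generating $P$ modulo a cyclic normal subgroup.
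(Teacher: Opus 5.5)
Your part (a) and the direction ``$\Rightarrow$'' of (b) are correct and are essentially the paper's argument. For (a) you can avoid the non-abelian hypothesis of Theorem~\ref{main1} by citing Theorem~\ref{cyclicsplitting}(a) and (b)(i) directly, as the paper does.

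The genuine gap is in the direction ``$\Leftarrow$'' of (b).

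\textbf{The large-kernel case rests on a wrong parameter.} You take $\beta'=0$ for a split presentation. But in Theorem~\ref{Sim} one has $p^{\beta'}=|L:R\cap L|$, so a split factorization has $\beta'=\gamma'$; compare Example~\ref{example5}. The value $\beta'=0$ would mean $L\subseteq R$, i.e.\ $P$ cyclic. With $\beta'=\gamma'$ the inequality $\gamma'\le\beta'+\delta'$ is vacuous, and your contradiction disappears. The argument also visibly proves too much:
\begin{itemize}
\item with $\beta'=0$ it would force every split metacyclic $p$-group to be abelian;
\item it never uses $\delta<\beta$, yet a split structure with kernel of order $exp(P)$ does coexist with a non-splittable one in the mixed-type group $P_2\cong P_3$ of Example~\ref{example4} ($P_3$ is split with $|K|=p^3=exp(P)$).
\end{itemize}

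\textbf{The small-kernel case is left open, and your proposed invariants cannot close it.} In both cases the split parameters satisfy all inequalities of Theorem~\ref{Sim} and reproduce $|P|$, $exp(P)$, $|P'|$ and $|Z(P)|$:
\begin{itemize}
\item large kernel: $\gamma'=\beta'=\alpha+\gamma-\beta$, $\alpha'=\beta$, $\delta'=\alpha+\delta-\beta$;
\item small kernel, whenever $\gamma<\beta+\delta$: $\gamma'=\beta'=\beta$, $\alpha'=\alpha+\gamma-\beta$, $\delta'=\beta+\delta-\gamma$.
\end{itemize}
Moreover, $|K\cap Z|$ is not an isomorphism invariant. The regularity fallback is only a hope, not an argument.

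\textbf{The missing idea is to compare abelianizations.} Since $\delta<\beta$, the subgroups $S\cap K$ and $P'$ of the cyclic group $K$ have orders $p^{\gamma-\beta}\le p^{\gamma-\delta}$. Hence $S\cap K\subseteq P'$, and
$$P/P'\cong K/P'\times S/(S\cap K)\cong C_{p^{\delta}}\times C_{p^{\alpha}},$$
which has exponent $p^{\alpha}$. Any split structure $P=Y\rtimes R$ has $P'\subseteq Y$ and $P/P'\cong Y/P'\times R$. Compare exponents in the two cases:
\begin{itemize}
\item for $|Y|=p^{\alpha+\gamma-\beta}$ this is $C_{p^{\alpha+\delta-\beta}}\times C_{p^{\beta}}$, of exponent $<p^{\alpha}$ (using $\delta<\beta<\alpha$);
\item for $|Y|=p^{\beta}$ it is $C_{p^{\beta+\delta-\gamma}}\times C_{p^{\alpha+\gamma-\beta}}$, of exponent $>p^{\alpha}$ (using $\beta<\gamma$).
\end{itemize}
Neither matches, so no split structure exists. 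This is the paper's argument, and it is exactly where the hypothesis $\delta<\beta$ is used.
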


\begin{proof}
(a) Since $|S\cap K|=p^{\gamma-\beta}$, the metacyclic factorization $P=SK$ 
itself is split if and only if $\beta=\gamma$.
\par 
Next, under the condition $\beta<\gamma$ we show that $1\to K\to P$ is splittable
if and only if $\beta\geq\alpha$. 
If $\beta<\alpha$, then $|S|=p^{\alpha+\gamma-\beta}>p^{\gamma}$ and by 
Lemma \ref{cardinalities} (a) we have $exp(P)=p^{\alpha+\gamma-\beta}>p^{\gamma}$. 
Since $\beta<\gamma$ we also have $exp(P)>p^{\alpha}$. So $exp(P)$ is different from
$|K|$ and $|P:K|$, and hence $1\to K\to P$ cannot be split by Theorem \ref{cyclicsplitting} (a).
\par 
If $\beta\geq\alpha$, then $exp(P)=p^{\gamma}=|K|$ by Lemma \ref{cardinalities} (a);
so $K\trianglelefteq P$ can be split by Theorem \ref{cyclicsplitting} (b)(i).
\\ \\
(b) Let the conditions be as in part (a). 
\par 
If $\delta\geq \beta$, then $x^{-1}yx=y^{p^{\delta}}y\in\langle 
y^{p^{\beta}}\rangle y=\langle x^{p^{\alpha}}\rangle y$, 
whence $yxy^{-1}\in x\langle x^{p^{\alpha}}\rangle$, 
so $S=\langle x\rangle$ is normal in $P$. Since $|S|=exp(P)$ by (the proof of)
part (a), the metacyclic structure $S\trianglelefteq P$ can be split. 
This includes the possibility $S=P$ (if $\beta=0$).
\par 
From now on we assume $\delta<\beta$. Since $S\cap K$ and $P'$ are both 
subgroups of $K$, of orders $p^{\gamma-\beta}$ resp. $p^{\gamma-\delta}$,
we see that $S\cap K$ is contained in $P'$.
Now $P/P'$ inherits a metacyclic factorization 
$P/P' \cong \widetilde{S}\widetilde{K}$ with kernel $\widetilde{K}=K/P'$
and $\widetilde{S}=S/(S\cap K)$. Actually, $\widetilde{S}\cap\widetilde{K}$ 
is trivial, and since $P/P'$ is abelian the split metacyclic structure is
direct, so
$$P/P'\cong \widetilde{K} \times \widetilde{S}\cong 
C_{p^{\delta}} \times C_{p^{\alpha}}.$$
On the other hand, if $P$ has a split metacyclic factorization with kernel
$Y$, then $P'\subseteq Y$ and $|Y|=p^{\alpha+\gamma-\beta}$ or $p^{\beta}$
by Theorem \ref{cyclicsplitting} (a). This would imply 
$$P/P'\cong C_{p^{\alpha+\delta-\beta}} \times C_{p^{\beta}}$$
or 
$$P/P'\cong C_{p^{\beta+\delta-\gamma}} \times C_{p^{\alpha+\gamma-\beta}}.$$
Neither one is isomorphic to $C_{p^{\delta}} \times C_{p^{\alpha}}$.
\end{proof}

\begin{Rk}\label{KorS}
As a by-product the proof of Theorem \ref{nonsplit} shows that if
$P$ is a split metacyclic group and $P=SK$ is any (not necessarily split(able))
metacyclic factorization, then $K$ or $S$ must be the kernel of a splittable 
metacyclic structure.  
\end{Rk}

\begin{Exm}\label{example4}
Using the conditions in Theorem \ref{Sim} we can easily determine all possible 
presentations of non-abelian metacyclic groups of order $p^4$. Note that being
non-abelian imposes the extra condition $\delta< \gamma$. We end up with
\\ \par
$P_1:\ \ \alpha=2,\ \beta=2,\ \gamma=2,\ \delta=1,\ exp(P)=p^2, \ |K|=p^2,\ |S|=p^2$
\par
$P_2:\ \ \alpha=2,\ \beta=1,\ \gamma=2,\ \delta=1,\ exp(P)=p^3,\ |K|=p^2,\ |S|=p^3$
\par
$P_3:\ \ \alpha=1,\ \beta=3,\ \gamma=3,\ \delta=2,\ exp(P)=p^3, \ |K|=p^3, \ |S|=p$
\par
$P_4:\ \ \alpha=1,\ \beta=2,\ \gamma=3,\ \delta=2,\ exp(P)=p,^3 \ |K|=p^3, \ |S|=p^2$.
\\ \\
Checking the conditions of Theorem \ref{nonsplit} we see that the factorization
$P_2 =SK$ is non-splittable, but that $P_2$ must also have a splittable factorization.
Actually, at least two, because a splittable metacyclic structure  always comes in
at least one split and one non-split factorization.
As $P_2$ cannot be isomorphic to $P_1$ because of the exponent, these splittable 
factorizations must be the ones given by $P_3$ and $P_4$.
\par
By the way, an isomorphism from $P_2$ to $P_3$ can be made explicit.
In the presentation of $P_2$ we take $K_1 =\langle x \rangle$, which is of order 
$p^3$, and $S_1 =\langle x^p y^{-1} \rangle$. Using the regularity of metacyclic 
$p$-groups as in the proof of Theorem \ref{main1}(b), we see that $x^p y^{-1}$
has order $p$. From $x^{-1}yx=y^{1+p}=x^{p^2}y$ we obtain 
$x^{x^p y^{-1}}=yxy^{-1}=x^{1+p^2}$.
\par
The isomorphism between $P_2$ and $P_4$ simply is switching $x$ and $y$.
\par
Summarizing, in accordance with the number in \cite{Liedahl} there are $2$ different 
non-abelian metacyclic groups of order $p^4$, namely $P_1$, which is automatically 
split, and $P_2\cong P_3\cong P_4$, which is of mixed type.  
\end{Exm}

\begin{Thm}\label{autosplit}
Let $P$ be a non-abelian, metacyclic $p$-group of odd order. Assume 
that $P$ has a normal cyclic subgroup $N$ such that $P/N$ is cyclic 
and $|P:N|=exp(P)$. Then every metacyclic structure $K\trianglelefteq P$ 
satisfies $|K|=|N|$ and is automatically split.
\end{Thm}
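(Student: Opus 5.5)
Let $K\trianglelefteq P$ be any metacyclic structure, $e=exp(P)$, and $|P|=p^N$. The plan is to show $|P:K|=e$; then Theorem \ref{main1}(c) (equivalently Theorem \ref{cyclicsplitting}(b)(ii)) gives that $K$ is automatically split, and $|K|=|P|/e=|N|$. By Theorem \ref{main1} there are three possibilities for $K$: $|P:K|=e$ (done), $|K|=e>|P:K|$, or $|K|<e$ and $|P:K|<e$. So two cases must be excluded.

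First I exclude $|K|=e>|P:K|$. In that case $|K|=e=|P:N|$, so $|N|=|P:K|<e$. Here I would compare abelianizations, as in the proof of Theorem \ref{nonsplit}(b). Since $N$ is automatically split, $P=N\rtimes\langle a\rangle$ with $|\langle a\rangle|=e$. Then $P/P'\cong N/P'\times C_e$. Likewise $K$ is splittable, so $P=K\rtimes\langle b\rangle$ with $|\langle b\rangle|=|P:K|$, and $P/P'\cong K/P'\times C_{|P:K|}$. The exponent of the first abelianization is $e$. The exponent of the second is $\max(|K:P'|,|P:K|)$, which is $<e$ as long as $P'\ne 1$. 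That holds because $P$ is non-abelian and $|P:K|<e$. This gives the contradiction. The detail to check is that $|K:P'|<|K|=e$; this holds since $P'\ne1$ and $P'\subseteq K$.

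Second I exclude the non-splittable case. Use the presentation of Theorem \ref{Sim} attached to $P=SK$ with $K=\langle y\rangle$. By Theorem \ref{nonsplit}(a) we have $\beta<\gamma$ and $\beta<\alpha$, so $e=|S|=p^{\alpha+\gamma-\beta}$. The proof of Theorem \ref{nonsplit}(b) splits further into two subcases. If $\delta\ge\beta$, then $S$ is normal with $|S|=e$, so $S$ is a structure of the already excluded type $|S|=e>|P:S|$ (note $|P:S|=p^\beta<e$). That contradicts the first case applied to $S$ in place of $K$, which is legitimate because $P/S$ is cyclic by Theorem \ref{cyclicsplitting}(b)(i). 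If $\delta<\beta$, then $P/P'\cong C_{p^\delta}\times C_{p^\alpha}$ has exponent $p^{\max(\delta,\alpha)}$. Meanwhile $P/P'\cong N/P'\times C_e$ has exponent $e=p^{\alpha+\gamma-\beta}$, which exceeds both $p^\alpha$ and $p^\delta$. This is a contradiction.

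The main obstacle is the bookkeeping in this non-splittable case, specifically making sure the abelianization comparison really uses only the invariant $exp(P/P')$. The key point is that the existence of $N$ forces $exp(P/P')=exp(P)$, whereas every other kind of structure yields $exp(P/P')<exp(P)$. I would therefore isolate a lemma first: a metacyclic $p$-group admits a structure of type (c) if and only if $exp(P/P')=exp(P)$. With that lemma, both exclusions reduce to exponent computations.
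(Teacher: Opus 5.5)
Your proof is correct, but it closes the argument by a different route from the paper. The paper handles a splittable $K$ by splitting it and invoking Theorem \ref{YaLiu} (Yang--Liu: all split factorizations have kernels of the same order), which gives $|K|=|N|$ and hence $|P:K|=exp(P)$ at once. For a non-splittable $K$ it uses Remark \ref{KorS} to produce a split structure whose kernel has order $exp(P)$. Theorem \ref{YaLiu} then forces $exp(P)=|N|=\sqrt{|P|}$, and Corollary \ref{sqroot} takes over, so this case is in fact vacuous.

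You never use Theorem \ref{YaLiu}. Instead you work with the invariant $exp(P/P')$:
\begin{itemize}
\item a split structure of index $exp(P)$ forces $exp(P/P')=exp(P)$;
\item a split structure with $|K|=exp(P)>|P:K|$ forces $exp(P/P')<exp(P)$, because $P'\neq 1$.
\end{itemize}
Your non-splittable case follows the same dichotomy $\delta\geq\beta$ versus $\delta<\beta$ as the proof of Theorem \ref{nonsplit}(b); its first branch is exactly Remark \ref{KorS}. But you finish both branches by comparing exponents rather than by citing Theorem \ref{YaLiu} or Theorem \ref{nonsplit}(b).

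Given the cited result, the paper's argument is shorter. Yours is more self-contained: you still rely on Theorem \ref{cyclicsplitting}(b)(i), but not on Yang--Liu's Lemma 9, and your first case is in effect an elementary proof of the instance of it needed here.

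Your route also yields more. Your two cases show, without ever using $N$, that any metacyclic structure not of type (c) gives $exp(P/P')<exp(P)$. So the converse direction of your proposed lemma comes for free, and for non-abelian $P$ the automatically split type is characterized intrinsically by $exp(P/P')=exp(P)$.
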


\begin{proof}
Theorem \ref{cyclicsplitting} (b)(ii) tells us that $N$ is the kernel of a metacyclic 
structure that splits automatically. Now let $K\trianglelefteq P$ be another 
metacyclic structure. 
\par 
If it can be split, we first split it, then apply Theorem \ref{YaLiu}, 
and then Theorem \ref{cyclicsplitting} (b)(ii) again. 
\par 
If it is non-splittable, by Lemma \ref{KorS} there also exists a split 
metacyclic structure whose kernel has order $exp(P)$. In this case 
Theorem \ref{YaLiu} implies $exp(P)=\sqrt{|P|}$ and Corollary \ref{sqroot} 
implies the rest.
\end{proof}

\begin{Exm}\label{example5}
Explicit calculations using Theorem \ref{nonsplit} show that there are $4$ non-isomorphic
non-abelian metacyclic groups of order $p^5$, and they are all split metacyclic.
This agrees with the numbers in \cite{Liedahl}. Concentrating on the split presentations 
we obtain
\\ \par
$P_5:\ \ \alpha=3,\ \gamma=2,\ \delta=1,\ |P'|=p, \ |Z|=p^3,\ exp(P)=p^3$
\par
$P_6:\ \ \alpha=2,\ \gamma=3,\ \delta=1,\ |P'|=p^2, \ |Z|=p,\ exp(P)=p^3$
\par
$P_7:\ \ \alpha=2,\ \gamma=3,\ \delta=2,\ |P'|=p, \ |Z|=p^3,\ exp(P)=p^3$
\par
$P_8:\ \ \alpha=1,\ \gamma=4,\ \delta=3,\ |P'|=p, \ |Z|=p^3,\ exp(P)=p^4$.
\\ \\
Note that $\beta=\gamma$ as $S\cap K$ is trivial.
\par
To avoid misunderstandings: We did not list the non-split factorizations of the 
splittable metacyclic structures.
\par
The $P_i$ with different $\gamma$ 
cannot be isomorphic by Theorem \ref{YaLiu}, and from $|Z|$ or $|P'|$ we see that
$P_6 \not\cong P_7$.
\par
Now let's parametrize the groups of order $p^5$ which have a non-splittable structure, 
simply by finding all parameters such that $\alpha+\gamma=5$ and $\beta$ satisfies the 
conditions from Theorem \ref{nonsplit} (a). We obtain
\par
$\alpha=3,\ \gamma=2,\ \delta=1,\ \beta=1,\ |P'|=p, \ |Z|=p^3,\ exp(P)=p^4,\ |K|=p^2,\ |S|=p^4$
\\ and
\par
$\alpha=2,\ \gamma=3,\ \delta=2,\ \beta=1, \ |P'|=p, \ |Z|=p^3,\ exp(P)=p^4, \ |K|=p^3, \ |S|=p^4$.
\\ \\
Either one must be isomorphic to one of the $4$ groups above, and because of the exponent
this can only be $P_8$. Consequently $P_8$ is of mixed type and $P_6$ and $P_7$ are purely 
splittable, whereas $P_5$ is of course automatically split.
\par
Also note that the non-splittable structures with $|K|=p^2$ and $|K|=p^3$ on the same group
$P_8$ show that Theorem \ref{YaLiu} does not hold for non-splittable metacyclic structures.
\end{Exm}

Before starting the proof of Theorem \ref{main2} we need an easy criterion to exclude the 
mixed typ case.

\begin{Prop}\label{PropP'Z}
Let $p$ be an odd prime and $P$ a finite split metacyclic $p$-group. If $P$ also has 
a non-splittable metacyclic structure, then 
$$P'\subseteq Z(P).$$
\end{Prop}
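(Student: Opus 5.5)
Let $K=\langle y\rangle$ be a non-splittable metacyclic structure on $P$. I would work with the presentation of Theorem \ref{Sim} attached to a factorization $P=SK$ with this $K$. By Theorem \ref{nonsplit}(a) the parameters then satisfy $\beta<\gamma$ and $\beta<\alpha$, and by Lemma \ref{cardinalities}(a) we get $exp(P)=p^{\alpha+\gamma-\beta}=|S|$. By Lemma \ref{cardinalities}(f) the goal $P'\subseteq Z(P)$ is equivalent to $\gamma\le 2\delta$. The proof therefore reduces to extracting this inequality from the fact that $P$ is also split.

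The first step is to split according to the position of $\delta$ relative to $\beta$. If $\delta\ge\beta$, the conclusion is immediate. Theorem \ref{Sim} gives $\gamma\le\beta+\delta\le 2\delta$, so $P'\subseteq Z(P)$.

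The remaining case is $\delta<\beta$. Here Theorem \ref{nonsplit}(b) shows that every metacyclic structure on $P$ is non-splittable. This contradicts the hypothesis that $P$ is split metacyclic, so the case cannot occur.

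Combining the two cases gives the proposition. The only real obstacle is to justify that Theorem \ref{nonsplit}(b) applies in full. Its condition $0<\delta<\beta<\gamma\le\beta+\delta<\alpha+\delta$ needs, beyond $\delta<\beta$, the inequalities $\beta<\gamma$ and $\beta+\delta<\alpha+\delta$, i.e.\ $\beta<\alpha$. Both of these are exactly the non-splittability conditions from part (a) for our chosen $K$, and $\gamma\le\beta+\delta$ and $\delta>0$ come from Theorem \ref{Sim}. So the full chain holds, and $P$ would be non-split metacyclic, which is a contradiction.
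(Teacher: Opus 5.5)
Your proposal is correct and follows essentially the same argument as the paper. Non-splittability of $K$ gives, via Theorem \ref{Sim} and Theorem \ref{nonsplit}(a), all inequalities of Theorem \ref{nonsplit}(b) except $\delta<\beta$; the split hypothesis then forces $\delta\geq\beta$, and $\gamma\leq\beta+\delta\leq 2\delta$ yields $P'\subseteq Z(P)$ by Lemma \ref{cardinalities}(f).
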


\begin{proof}
Let $K\trianglelefteq P$ be a non-splittable metacyclic structure and choose generators as 
in Theorem \ref{Sim}. Then by Theorem \ref{Sim} and Theorem \ref{nonsplit} (a) all
inequalities from Theorem \ref{nonsplit} (b) except $\delta <\beta$ are satified.
Since $P$ also has a split metacyclic structure, we must have $\delta\geq \beta$.
By the other inequalities from Theorem \ref{nonsplit} (b) this implies 
$\gamma\leq\beta +\delta\leq 2\delta$, which by Lemma \ref{cardinalities} (f) is
equivalent to $P'\subseteq Z(P)$. 
\end{proof}

\begin{Cor}\label{CorP'Z}
Let $p$ be an odd prime and $P$ a finite metacyclic $p$-group.
\begin{itemize}
\item[(a)] If $|P'|^3 >|P|$, then either every metacyclic structure on $P$ is
splittable or none is splittable.
\item[(b)] If $|Z(P)|^3 <|P|$, then either every metacyclic structure on $P$ is
splittable or none is splittable.
\end{itemize}
\end{Cor}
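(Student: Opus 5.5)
The plan is to argue by contraposition and reduce both parts to Proposition \ref{PropP'Z}. Suppose $P$ has a splittable metacyclic structure and also a non-splittable one. Then $P$ is a split metacyclic group, so Proposition \ref{PropP'Z} gives $P'\subseteq Z(P)$, and hence $|P'|\le |Z(P)|$. We may assume $P$ is non-abelian, as Proposition \ref{PropP'Z} implicitly does through its use of Theorem \ref{nonsplit}. Otherwise $P'=1$, so $|P'|^3\le |P|$ and (a) is vacuous. For (b), if $P$ is abelian then $Z(P)=P$ and $|Z(P)|^3\ge |P|$, so (b) is vacuous as well.

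Next, combine $|P'|\le |Z(P)|$ with Theorem \ref{HeKue}, which states $|P'|^2|Z(P)|=|P|$. For (a), this gives
$$|P|=|P'|^2|Z(P)|\ge |P'|^3,$$
which contradicts the hypothesis $|P'|^3>|P|$. For (b), it gives
$$|P|=|P'|^2|Z(P)|\le |Z(P)|^3,$$
which contradicts $|Z(P)|^3<|P|$. In both cases the mixed situation is therefore impossible, so either every metacyclic structure on $P$ is splittable or none is.

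There is little real difficulty here. The only points to check are that ``has a splittable structure'' really means ``split metacyclic'', so that Proposition \ref{PropP'Z} applies, and that Theorem \ref{HeKue} holds for all metacyclic $p$-groups with $p$ odd, which is how it was stated.
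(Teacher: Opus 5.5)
Your proof is correct and matches the paper's argument: both combine Proposition \ref{PropP'Z} (a mixed-type $P$ must satisfy $P'\subseteq Z(P)$) with the identity $|P'|^2|Z(P)|=|P|$ from Theorem \ref{HeKue}. The paper phrases it directly, noting that each hypothesis forces $|P'|>|Z(P)|$, while you argue by contraposition; your separate treatment of the abelian case is harmless but unnecessary, since both cited results hold trivially for abelian $P$.
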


\begin{proof}
By Theorem \ref{HeKue} either condition, (a) or (b), implies $|P'|>|Z(P)|$,
so the condition $P'\subseteq Z(P)$ from Proposition \ref{PropP'Z} cannot
hold.
\end{proof}

The group $P_7$ from Example \ref{example5} shows that $P'\subseteq Z$ does not imply that 
$P$ has both, splittable and non-splittable metacyclic structures. In other words, the converse of 
Proposition \ref{PropP'Z} does not hold. And indeed, for one of the cases in our main proof we 
need an additional criterion.

\begin{Lem}\label{p^6} 
A metacyclic group $P$ of order $p^6$ with $exp(P)=p^4$ and $|Z(P)|=p^2$ 
cannot be of mixed type.
\end{Lem}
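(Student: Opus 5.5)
We argue by contradiction. Suppose $P$ is of mixed type, so it has a non-splittable metacyclic structure $K\trianglelefteq P$ and also a splittable one. In particular $P$ is split metacyclic. We fix generators for the non-splittable structure as in Theorem \ref{Sim}, with parameters $\alpha,\beta,\gamma,\delta$. The plan is to translate each of the three hypotheses ($|P|=p^6$, $exp(P)=p^4$, $|Z(P)|=p^2$) into a linear relation among these parameters. We then check that the resulting system is incompatible with the inequalities forced by being of mixed type.

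First we collect the constraints.
\begin{itemize}
\item[(1)] From Theorem \ref{Sim}, $|P|=p^{\alpha+\gamma}$, so $\alpha+\gamma=6$.
\item[(2)] From Theorem \ref{nonsplit}(a), non-splittability gives $\beta<\gamma$ and $\beta<\alpha$.
\item[(3)] By Lemma \ref{cardinalities}(a), and since $\beta<\alpha$, we have $exp(P)=p^{\alpha+\gamma-\beta}$. Hence $6-\beta=4$, i.e.\ $\beta=2$.
\item[(4)] By Theorem \ref{HeKue}, $|P'|^2\,|Z(P)|=|P|$ gives $|P'|=p^2$. Since $|P'|=p^{\gamma-\delta}$, this means $\delta=\gamma-2$. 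Equivalently one can use Lemma \ref{cardinalities}(e): $\alpha-\gamma+2\delta=2$.
\item[(5)] Because $P$ also has a splittable structure, the argument in the proof of Proposition \ref{PropP'Z} applies. If $\delta<\beta$ held, Theorem \ref{nonsplit}(b) would make every structure non-splittable. Hence $\delta\geq\beta=2$.
\end{itemize}

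Now we combine them. From $\beta<\gamma$ and $\beta<\alpha=6-\gamma$ with $\beta=2$ we get $2<\gamma<4$, so $\gamma=3$. Then (4) forces $\delta=1$, which contradicts $\delta\geq 2$ from (5). So $P$ cannot be of mixed type.

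The only delicate point is step (5). We must make sure that the existence of some splittable structure on $P$ really yields $\delta\geq\beta$ for the parameters of the \emph{non-splittable} structure $K$. This is exactly the content of Theorem \ref{nonsplit}(b), read contrapositively, so we quote it as it is used in Proposition \ref{PropP'Z}. As a consistency check, the same conclusion can be phrased via that proposition: $P'\subseteq Z(P)$ together with $|P'|=|Z(P)|=p^2$ gives $P'=Z(P)$, i.e.\ $\gamma\le 2\delta$. With $\gamma=3$ this already forces $\delta\geq 2$, contradicting $\delta=\gamma-2=1$.
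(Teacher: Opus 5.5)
Your proof is correct and follows essentially the same route as the paper. The paper also starts from a non-splittable presentation and pins down $\alpha=\gamma=3$, $\beta=2$, $\delta=1$, getting $\gamma=3$ directly from $|K|,|P:K|<exp(P)$ and $\delta$ from $|Z|$ via Lemma \ref{cardinalities}(e), which is equivalent to your use of Theorem \ref{HeKue}; it then concludes from $\delta<\beta$ and Theorem \ref{nonsplit}(b) that no splittable structure exists, and your contradiction via $\delta\geq\beta$ is just the contrapositive of that step.
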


\begin{proof}
Assume that there is a non-splittable metacyclic structure.
From $|K|< exp(P)$ and $|P:K|< exp(P)$ we obtain $\gamma=3$, and hence $\alpha=3$.
Furthermore, Lemma \ref{cardinalities} gives $\beta=2$ (from $exp(P)=p^4$) and 
$\delta=1$ (from $|Z|=p^2$).
\par
This is indeed a valid non-splittable presentation of a group $P_9$, and 
Theorem \ref{nonsplit} or Proposition \ref{PropP'Z} says that $P_9$ cannot have 
a splittable metacyclic structure. 
\par
The other possibility is of course that our assumption was incorrect and the group has
no non-splittable structure at all. 
For example 
\par
$P_{10}:\ \ \alpha=2, \ \beta=4, \ \gamma=4, \ \delta=2$ 
\\
is such a (purely splittable) group.
\par
This means that the conditions in the lemma do not determine the group up to isomorphism.
\end{proof}

We indeed need the argument with the presentation of $P_9$ to conclude that $P_{10}$
is purely splittable. Working with the presentation of $P_{10}$ alone does not suffice, because
it does not know anything about non-splittable structures. By contrast, the content of
Theorem \ref{nonsplit} is that a non-splittable presentation knows whether there are
splittable ones.
\par
By the way, the group $P_9$ from the proof of Lemma \ref{p^6} shows that 
Proposition \ref{PropP'Z} is stronger than Corollary \ref{CorP'Z}.
\\ \\
{\bf Proof of Theorem \ref{main2}:}
The non-abelian metacyclic group of order $p^3$ is purely splittable as the conditions 
$\beta<\gamma$ and $\beta<\alpha$ for being non-split from Theorem \ref{nonsplit} 
cannot both hold.
\par
Theorem \ref{autosplit} shows that the automatically split metacyclic structures do not mix 
with the other ones. So the $4$ types listed in Theorem \ref{main2} already cover all groups.
\par
(a) As examples for automatically split metacyclic groups of order $p^N$ for every $N\geq 4$
we can take
\\
$\alpha =N-2,\ \beta=2,\ \gamma=2,\ \delta=1$ with $exp(P)=p^{N-2}=|Z|$.
\par
(b) Examples for non-split type for every $N\geq 6$ are
\\
$\alpha=N-3,\ \beta=2,\ \gamma=3,\ \delta=1$ with $exp(P)=p^{N-2}$ and $|Z|=p^{N-4}$,
\\
as can be checked with Theorem \ref{nonsplit}.
\par
(c) If $N$ is even, say $N=2k-2$, then 
\\
$\alpha=k-2,\ \beta=k,\ \gamma=k,\ \delta=2$ with $exp(P)=p^k$ and $|Z|=p^2$
\\
is a split presentation of a purely splittable $P$ with $|P|=p^N$ for every $N\geq 6$.
For $N>6$ this follows from Corollary \ref{CorP'Z}. For $N=6$ this is the group $P_{10}$
from the proof of Lemma \ref{p^6}. For $N=4$ there is no purely splittable group by 
Example \ref{example4}.
\par
For odd $N$, say $N=2k-1$, we take
\\
$\alpha=k-1,\ \beta=k,\ \gamma=k,\ \delta=1$ with $exp(P)=p^k$, $|Z|=p$
\\
and $|P|=p^N$ for every odd $N\geq 5$ and again use Corollary \ref{CorP'Z}.
The group of order $3$ has already been mentioned at the beginning of the proof.
\par
(d) For $N\geq 4$ the group
\\
$\alpha=N-2,\ \beta=1,\ \gamma=2,\ \delta=1$ with $exp(P)=p^{N-1}$ and $|Z|=p^{N-2}$
\\
is of mixed type by Theorem \ref{nonsplit}.
{} \hfill $\Box$
\\


\bigskip

\end{document}